\documentclass[11pt,reqno]{amsart}
\usepackage{amsmath, amsthm, amssymb, amsfonts}
\usepackage{bbm}
\usepackage{bm}
\usepackage[utf8]{inputenc}
\usepackage{graphicx}
\usepackage{xcolor}
\usepackage{enumitem}
\usepackage{tikz}
\usepackage{tikz-cd}
\usepackage{pgfplots}
\usepackage{rotating}
\usepackage{bookmark}
\usepackage{thmtools}
\usepackage{hyperref}
\usepackage{cleveref}

\pgfplotsset{compat=1.18}

\theoremstyle{plain}

\usepackage[backend=biber,sorting=nty]{biblatex}

\numberwithin{equation}{section}

\newtheorem{theorem}{Theorem}[section]
\newtheorem{lemma}[theorem]{Lemma}
\newtheorem{proposition}[theorem]{Proposition}

\newtheorem{claim}[theorem]{Claim}

\theoremstyle{definition}
\newtheorem{definition}[theorem]{Definition}

\newtheorem*{axiom}{Axiom}

\newcommand{\func}{\operatorname}

\newcommand{\R}{\mathbb{R}}

\newcommand{\N}{\mathbb{N}}

\renewcommand{\bar}[1]{\overline{#1}}

\newcommand{\cal}{\mathcal}

\renewcommand{\bf}{\mathbf}

\newcommand{\ZFC}{\mathsf{ZFC}}

\newcommand{\forces}{\Vdash}
\newcommand{\one}{\mathbbm{1}}
\newcommand{\restrictedto}{\mathord{\upharpoonright}}

\newcommand{\Coll}{\func{Coll}}

\newcommand{\V}{\bf{V}}
\newcommand{\W}{\bf{W}}
\newcommand{\X}{\cal{X}}

\newcommand{\lh}{\func{lh}}

\newcommand{\depth}{\func{depth}}

\newcommand{\fin}{\mathrm{fin}}
\newcommand{\D}{\cal{D}}

\newcommand{\E}{\cal{E}}

\NewCommandCopy\strokeL\L
\DeclareRobustCommand{\L}{\ifmmode\mathbf{L}\else\strokeL\fi}
\NewCommandCopy\pilcrowP\P
\DeclareRobustCommand{\P}{\ifmmode\mathbb{P}\else\pilcrowP\fi}
\NewCommandCopy\accH\H
\DeclareRobustCommand{\H}{\ifmmode\cal{H}\else\expandafter\accH\fi}
\NewCommandCopy\slashO\O
\DeclareRobustCommand{\O}{\ifmmode\cal{O}\else\slashO\fi}
\NewCommandCopy\Sec\S
\DeclareRobustCommand{\S}{\ifmmode\mathcal{S}\else\Sec\fi}

\newenvironment{midproof}[1][Proof]
  {\begin{proof}[#1]}
  {\end{proof}}

\makeatletter
\DeclareRobustCommand\bigop[1]{%
  \mathop{\vphantom{\sum}\mathpalette\bigop@{#1}}\slimits@
}
\newcommand{\bigop@}[2]{%
  \vcenter{%
    \sbox\z@{$#1\sum$}%
    \hbox{\resizebox{\ifx#1\displaystyle.9\fi\dimexpr\ht\z@+\dp\z@}{!}{$\m@th#2$}}%
  }%
}
\makeatother

\DeclareFontShape{OT1}{cmr}{bx}{sc}{<-> cmbcsc10}{}

\title[Every set is Ramsey in the Solovay model]{Every subset of a topological Ramsey space is Ramsey in the Solovay model}
\author{Clement Yung}

\begin{document}
\begin{abstract}
    We provide a proof, free of ultrafilters and almost reduction, that every subset of a topological Ramsey space is Ramsey in the Solovay model.
\end{abstract}

\maketitle

\section{Introduction}
\label{sec:intro}
Let $[\N]^\infty$ denote the set of infinite subsets of the set of natural numbers $\N$. Given an infinite $A \subseteq \N$ and a finite $a \subseteq A$, $[a,A]$ denotes the set of all infinite subsets $B \subseteq A$ such that $a$ is an initial segment of $B$ (i.e. $B \cap (\max(a) + 1) = a$, where $\max(\emptyset) := -1$). A subset $\X \subseteq [\N]^\infty$ is \emph{(completely) Ramsey} if for all $A \in [\N]^\infty$ and finite $a \subseteq A$, there exists some $B \in [a,A]$ such that $[a,B] \subseteq \X$ or $[a,B] \subseteq \X^c$. 

\begin{theorem}[Mathias, \cite{M77}]
\label{thm:Mathias}
    Let $\kappa$ be an inaccessible cardinal, and let $G$ be $\Coll(\omega,{<}\kappa)$-generic. Then in $\L(\R)^{\V[G]}$, every subset of $[\N]^\infty$ is Ramsey.
\end{theorem}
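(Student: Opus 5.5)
We reduce to sets definable from a real and an ordinal, and then show that a Mathias generic over an intermediate model lands in such a set or its complement according to a forcing condition. Any $\X \subseteq [\N]^\infty$ in $\L(\R)^{\V[G]}$ is definable in $\L(\R)^{\V[G]}$ from a real $r$ and finitely many ordinals. So there is a formula $\varphi$ with $\X = \{B : \L(\R)^{\V[G]} \models \varphi(B, r, \vec\alpha)\}$. Fix $A \in [\N]^\infty$ and finite $a \subseteq A$, both in $\V[G]$. By the ${<}\kappa$-chain condition and the usual factoring of the Lévy collapse, there is $\lambda<\kappa$ with $r, A \in \V[G\restrictedto\lambda]$. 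Moreover, for any real $x$ in $\V[G]$ we have $\V[G]=\V[G\restrictedto\lambda][x][H]$ with $H$ generic for a collapse isomorphic to $\Coll(\omega,{<}\kappa)$ over $\V[G\restrictedto\lambda][x]$. By homogeneity of the collapse, whether $\varphi(x,r,\vec\alpha)$ holds in $\L(\R)^{\V[G]}$ is then decided by a statement $\psi(x)$ about $x$ evaluated in $M[x]$, where $M:=\V[G\restrictedto\lambda]$.

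Next we bring in Mathias forcing $\mathbb{M}$ with conditions $(s,S)$, localized below $(a,A)$. Let $\dot x$ be the name for the Mathias real. Some condition below $(a,A)$ decides $\psi(\dot x)$ over $M$. We want to choose it with the same stem $a$, i.e. of the form $(a,A')$ with $A' \in [a,A]\cap M$. This uses the Prikry property of Mathias forcing: any sentence can be decided without extending the stem. In the ultrafilter-free spirit of the paper, we would prove it by a direct fusion argument. Call $(s,S)$ good if some $S'\subseteq S$ makes $(s,S')$ decide the sentence. Diagonalizing over finite extensions of $s$ and using Ellentuck/Galvin–Prikry style combinatorics for open sets (or the Nash-Williams theorem) yields $A'$ such that $(a,A')$ decides. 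Say $(a,A')\forces\psi(\dot x)$.

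The key remaining fact is that every $B\in[a,A']$ lying in $\V[G]$ that is Mathias generic over $M$ satisfies $\psi(B)$, and hence $B\in\X$. This uses the fact that infinite subsets of Mathias reals are Mathias generic. So the idea is to pick, in $\V[G]$, a Mathias-generic real $x$ over $M$ through $(a,A')$; this is possible since $\mathcal{P}(\mathbb{M})^M$ is countable in $\V[G]$. Then set $B:=x$, so that $[a,B]$ consists only of infinite subsets of $x$ with stem $a$. Each such $C$ is Mathias generic over $M$ below $(a,A')$. Therefore $M[C]\models\psi(C)$, giving $C\in\X$, and thus $[a,B]\subseteq\X$. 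The case where $(a,A')$ forces $\neg\psi$ is symmetric and gives $[a,B]\subseteq\X^c$.

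The main obstacle is the step that every $C\in[a,x]$ is again $M$-generic. This is Mathias's genericity-of-subsets theorem, and its standard proof itself uses the Prikry property together with a combinatorial diagonalization. To prove it directly, we would show the following. For a dense open $D\subseteq\mathbb{M}$ in $M$ and a condition $(s,S)$, there is $S'\subseteq S$ such that for all $t$ end-extending $s$ within $S'$ with enough elements, $(t,S'\setminus(\max t+1))\in D$. This is a strong fusion using the Galvin–Prikry theorem applied in $M$. Any subset of $x$ then meets $D$, since $x$ diagonalizes such $S'$ by genericity.
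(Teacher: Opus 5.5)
Your proposal is correct and is essentially the paper's proof specialised to the Ellentuck space. It uses the same steps: homogeneity of the collapse over an intermediate model $\V[G\restrictedto\lambda]$ containing $r$ and $A$, the Prikry property to decide $\psi(\dot x)$ with stem $a$, a generic $x$ through $(a,A')$ found in $\V[G]$, and the Mathias property to put all of $[a,x]$ on one side.

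One caution concerns your optional sketch of the Mathias property. Capturing $D$ only along end-extensions of a single stem $s$ does not suffice, because a subset of $x$ need not end-extend the stem of the condition that $x$ meets. One must capture from every possible stem at once, which is what Lemma~\ref{lem:R.is.semiselective.improved} and the dense set $\D'$ in Proposition~\ref{prop:semiselective.implies.Mathias} achieve. Since you cite Mathias's theorem for this step, the main argument is unaffected.
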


Much of the classical Ramsey theory of $[\N]^\infty$ has been extended to topological Ramsey spaces (i.e. closed triples $(\cal{R},\leq,r)$ satisfying the axioms \textbf{A1}--\textbf{A4} introduced by Todor\v{c}evi\'{c} in \cite{T10}), so it is natural to ask whether Theorem \ref{thm:Mathias} holds in this generality. The key obstacle lies in Mathias' use of selective ultrafilters in his arguments --- while the almost subset relation $\subseteq^*$ serves as a natural $\sigma$-closed quasi-order on $[\N]^\infty$ that allows diagonalisation of sequences, there is no analogue that is $\sigma$-closed for topological Ramsey spaces in general. In \cite{DMN15}, Di Prisco--Mijares--Nieto developed the abstract Mathias forcing $\mathbb{M}_\cal{R}$, and proved that $\mathbb{M}_\cal{R}$ satisfies the Mathias and Prikry properties if the almost reduction relation $\leq^*$ (where $A \leq^*B$ iff $\emptyset \neq [a,A] \subseteq [a,B]$ for some $a \in \cal{AR}$) is $\sigma$-closed. While the $\sigma$-closed hypothesis on $\leq^*$ was removed in the author's paper \cite{Y25} (so $\mathbb{M}_\cal{R}$ satisfies both the Mathias and Prikry properties for every topological Ramsey space), a proof of the corresponding generalisation of Theorem \ref{thm:Mathias} has not appeared in the literature.

In this paper, we review the axioms of topological Ramsey spaces and the abstract Mathias forcing, and prove the following:

\begin{theorem}
\label{thm:main}
    Let $(\cal{R},\leq,r)$ be a closed triple satisfying \textbf{A1}--\textbf{A4} such that $\cal{AR}$ is countable. Let $\kappa$ be an inaccessible cardinal, and let $G$ be $\Coll(\omega,{<}\kappa)$-generic. Then in $\L(\R)^{\V[G]}$, every subset of $\cal{R}$ is Ramsey.
\end{theorem}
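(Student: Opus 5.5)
We follow the Solovay-model template, with Mathias' selective ultrafilters replaced by the abstract Mathias forcing $\mathbb{M}_\cal{R}$ and its Mathias and Prikry properties from \cite{Y25}, which require no $\sigma$-closure of $\leq^*$.

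\textbf{Step 1 (definability).} Work in $\V[G]$ and fix $\X \subseteq \cal{R}$ in $\L(\R)^{\V[G]}$. Then $\X$ is definable in $\L(\R)$ from a real $x$ and ordinals, so there is a formula $\varphi$ with
\[
\X = \{ B \in \cal{R} : \L(\R) \models \varphi(B, x, \vec\alpha)\}.
\]
Since $\cal{AR}$ is countable, every element of $\cal{R}$ is coded by a real, and so is every pair $(a,A)$. Fix such a pair $(a,A)$, with $A \in \cal{R}$ and $a \in \cal{AR}$, $a \leq_{\mathrm{fin}} A$. By the factor lemma for $\Coll(\omega,{<}\kappa)$, there is $\lambda<\kappa$ with $x, A \in \V[G\restrictedto\lambda]$, and $\V[G]$ is a $\Coll(\omega,{<}\kappa)$-generic extension of $\V[G\restrictedto\lambda]$. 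By homogeneity of the collapse, for every $B \in \cal{R}^{\V[G]}$ that is generic over $\V[G\restrictedto\lambda]$ for a forcing of size $<\kappa$, whether $B\in\X$ is decided by $\one$ of the remaining collapse over $\V[G\restrictedto\lambda][B]$.

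\textbf{Step 2 (the Prikry step).} In $M:=\V[G\restrictedto\lambda]$, consider $\mathbb{M}_\cal{R}$ below the condition $(a,A)$, with $\dot{B}_{gen}$ the name for the generic element of $\cal{R}$. Let $\psi(\dot B_{gen})$ be the statement ``$\Coll(\omega,{<}\kappa)$ forces that $\L(\R)\models\varphi(\dot B_{gen},x,\vec\alpha)$''. By the Prikry property there is $A' \in [a,A]$, $A'\in M$, such that $(a,A')$ decides $\psi$; say it forces $\psi$ (the other case is symmetric).

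\textbf{Step 3 (Mathias property and genericity).} Because $\mathbb{M}_\cal{R}^M$ has size $<\kappa$ and $M$ has only countably many dense sets in $\V[G]$, there is a $B_0 \in [a,A']$ in $\V[G]$ that is $\mathbb{M}_\cal{R}$-generic over $M$ with $(a,A')$ in its generic filter. I expect this step to be the main obstacle: one has to build $B_0$ by a fusion that meets countably many dense open sets. The natural attempt is to use \textbf{A3}/\textbf{A4} together with the Ramsey property of the relevant sets, which plays the role of the combinatorial lemma underlying \cite{Y25}, so that the construction never appeals to $\leq^*$. We then take $B:=B_0$. By the Mathias property, every $C\in[a,B]$ is also $\mathbb{M}_\cal{R}$-generic over $M$ and contains $(a,A')$ in its generic filter. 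Moreover $M[C]\subseteq\V[G]$, and $\V[G]$ is a $\Coll(\omega,{<}\kappa)$-extension of $M[C]$ by the universality of the Levy collapse for small forcings. Hence $\psi(C)$ holds in $M[C]$, so $C\in\X$. Therefore $[a,B]\subseteq\X$, and in the other case $[a,B]\subseteq\X^c$. Finally, $B\in\L(\R)$ because $B$ is coded by a real, so the Ramsey witness lies in $\L(\R)^{\V[G]}$.
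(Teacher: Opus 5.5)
Your proposal is correct and follows the paper's proof essentially step for step: definability from a real and ordinals, passing to $M=\V[G\restrictedto\lambda]$ (the paper's $\W$), the Prikry property to get $(a,A')\leq_0(a,A)$ deciding $\phi^*$, a generic element of $[a,A']$, and then the Mathias property with Lemma~\ref{lem:Solovay}(2) and homogeneity of the collapse; you should also record, as the paper does, that closedness and \textbf{A1}--\textbf{A4} hold in $M$ by Shoenfield absoluteness, so that the Prikry and Mathias properties are available there. The step you flag as the main obstacle needs no fusion and no appeal to \textbf{A3}/\textbf{A4}: since $M$ has only countably many dense subsets of $(\mathbb{M}_\cal{R})^M$ in $\V[G]$, a descending sequence below $(a,A')$ meeting all of them generates an $M$-generic filter $H\ni(a,A')$, and Lemma~\ref{lem:generic.filter.and.element} turns $H$ into a generic element $A_H\in[a,A']$, which is exactly the paper's use of Lemma~\ref{lem:Solovay}(1).
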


\section{Topological Ramsey spaces}
\label{sec:TRS}
The axiomatisation of topological Ramsey spaces was initially studied by Carlson and Simpson in \cite{CS90}. In \cite{T10}, Todor\v{c}evi\'{c} presented the four axioms, labelled \textbf{A1}--\textbf{A4}, which are sufficient for a closed triple $(\cal{R},\leq,r)$ to be a topological Ramsey space. We shall recap the axioms in this section.

Here, $\cal{R}$ is a non-empty set, $\leq$ is a quasi-order on $\cal{R}$, and $r : \cal{R} \times \omega \to \cal{AR}$ is a surjective function. We also define a sequence of maps $r_n : \cal{R} \to \cal{AR}$ by $r_n(A) := r(A,n)$ for all $A \in \cal{R}$. Let $\cal{AR}_n \subseteq \cal{AR}$ be the image of $r_n$ (i.e. $a \in \cal{AR}_n$ iff $a = r_n(A)$ for some $A \in \cal{R}$).

\begin{axiom}[\textbf{A1}, Sequencing]
\hfill
    \begin{enumerate}[label=(\arabic*)]
        \item $r_0(A) = \emptyset$ for all $A \in \cal{R}$.

        \item $A \neq B$ implies $r_n(A) \neq r_n(B)$ for some $n$.

        \item $r_n(A) = r_m(B)$ implies $n = m$ and $r_k(A) = r_k(B)$ for all $k < n$.
    \end{enumerate}
\end{axiom}

For each $a \in \cal{AR}$, let $\lh(a)$ denote the unique $n$ such that $a \in \cal{AR}_n$. By Axiom \textbf{A1}(3), this $n$ is well-defined. Axiom \textbf{A1}(3) also implies that if $a \in \cal{AR}_n$ and $k \leq n$, then $r_k(A) = r_k(B)$ for any $A,B \in \cal{R}$ with $r_n(A) = r_n(B) = a$, so we may unambiguously extend $r_k$ to $\{a \in \cal{AR} : \lh(a) \geq k\}$ by letting $r_k(a)$ denote this common value. For $a,b \in \cal{AR}$, we write $a \sqsubseteq b$ iff $a = r_{\lh(a)}(b)$, and for $B \in \cal{R}$, we write $a \sqsubseteq B$ iff $a = r_{\lh(a)}(B)$.

\begin{axiom}[\textbf{A2}, Finitisation]
    There is a quasi-ordering $\leq_\fin$ on $\cal{AR}$ such that:
    \begin{enumerate}[label=(\arabic*)]
        \item $\{a \in \cal{AR} : a \leq_\fin b\}$ is finite for all $b \in \cal{AR}$.

        \item $B \leq A$ iff $\forall n \exists m [r_n(B) \leq_\fin r_m(A)]$.

        \item $\forall a,b,c \in \cal{AR}[a \sqsubseteq b \wedge b \leq_\fin c \to \exists d \sqsubseteq c[a \leq_\fin d]]$.
    \end{enumerate}
\end{axiom}

We may define the \emph{Ellentuck neighbourhoods} as follows: For any $A \in \cal{R}$, $a \in \cal{AR}$ and $n \in \N$, we let:
\begin{align*}
    [a,A] &:= \{B \in \cal{R} : a \sqsubseteq B \wedge B \leq A\}, \\
    [n,A] &:= [r_n(A),A], \\
    [A] &:= [\emptyset,A], \\
    [a] &:= \{B \in \cal{R} : a \sqsubseteq B\}.
\end{align*}
We also define the depth function $\depth_A$, for $A \in \cal{R}$ and $a \in \cal{AR}$, by:
\begin{align*}
    \depth_A(a) := 
    \begin{cases}
        \min\{n < \omega : a \leq_\fin r_n(A)\}, &\text{if such $n$ exists}, \\
        \infty, &\text{otherwise}.
    \end{cases}
\end{align*}

We also define, for each $A \in \cal{R}$ and $n < \omega$:
\begin{align*}
    \cal{AR}\restrictedto A &:= \{a \in \cal{AR} : \exists m[a \leq_\fin r_m(A)]\}, \\
    \cal{AR}_n\restrictedto A &:= \cal{AR}_n \cap \cal{AR}\restrictedto A.
\end{align*}
If $a \in \cal{AR}\restrictedto A$, then:
\begin{align*}
    \cal{AR}\restrictedto[a,A] &:= \{b \in \cal{AR}\restrictedto A : a \sqsubseteq b\}, \\
    \cal{AR}\restrictedto[a] &:= \{b \in \cal{AR} : a \sqsubseteq b\}, \\
    \cal{AR}\restrictedto a &:= \{b \in \cal{AR} : b \leq_\fin a\},  \\
    \cal{AR}_n\restrictedto[a,A] &:= \cal{AR}_n \cap \cal{AR}\restrictedto[a,A], \\
    \cal{AR}_n\restrictedto[a] &:= \cal{AR}_n \cap \cal{AR}\restrictedto[a], \\
    \cal{AR}_n\restrictedto a &:= \cal{AR}_n \cap \cal{AR}\restrictedto a, \\
    r_n[a,A] &:= \{b \in \cal{AR}\restrictedto[a,A] : \lh(b) = n\}, \\
    r_n[A] &:= r_n[\emptyset,A].
\end{align*}
Note that for all $a \in \cal{AR}$, $\depth_A(a) < \infty$ iff $a \in \cal{AR}\restrictedto A$. 

\begin{axiom}[\textbf{A3}, Amalgamation]
    Let $A \in \cal{R}$.
    \begin{enumerate}[label=(\arabic*)]
        \item For all $a \in \cal{AR}\restrictedto A$ and $B \in [\depth_A(a),A]$, $[a,B] \neq \emptyset$.

        \item If $B \leq A$ and $a \in \cal{AR}\restrictedto B$, then there exists $B' \in [\depth_A(a),A]$ such that $\emptyset \neq [a,B'] \subseteq [a,B]$.
    \end{enumerate}
\end{axiom}

\begin{axiom}[\textbf{A4}, Pigeonhole]
    For all $A \in \cal{R}$, $a \in \cal{AR}\restrictedto A$ and $\O \subseteq \cal{AR}_{\lh(a)+1}$, there exists some $B \in [\depth_A(a),A]$ such that $r_{\lh(a)+1}[a,B] \subseteq \O$ or $r_{\lh(a)+1}[a,B] \subseteq \O^c$.
\end{axiom}

Note that, by \textbf{A3}, the $B$ in \textbf{A4} may instead be taken to be an element of $[a,A]$. By Axiom \textbf{A1}, we may identify each element $A \in \cal{R}$ with a sequence of elements of $\cal{AR}$, via the map $A \mapsto (r_n(A))_{n<\omega}$. Therefore, we may identify $\cal{R}$ with a subset of $\cal{AR}^\N$.

We shall focus on the following topologies on $\cal{R}$.
\begin{itemize}
    \item The \emph{metric topology} induced by the first difference metric, where for $A,B \in \cal{R}$, $d(A,A) = 0$ and $d(A,B) = \frac{1}{2^n}$ for $A \neq B$, where $n$ is the least integer such that $r_n(A) \neq r_n(B)$. The sets of the form $[a]$ for $a \in \cal{AR}$ form a neighbourhood base of this topology. If $\cal{AR}$ is countable, then $\cal{R}$ is a Polish space under this metric topology.

    \item The \emph{Ellentuck topology} generated by open sets of the form $[a,A]$ for $A \in \cal{R}$ and $a \in \cal{AR}\restrictedto A$.
\end{itemize}
Unless stated otherwise, all topological properties of subsets of $\cal{R}$ mentioned in this paper are with respect to the metric topology.

\begin{definition}
    We say that $(\cal{R},\leq,r)$ is a \emph{closed triple} if $\cal{R}$ is a metrically closed subset of $\cal{AR}^\N$.
\end{definition}

\begin{definition}
    Let $(\cal{R},\leq,r)$ be a closed triple satisfying \textbf{A1} and \textbf{A2}. A subset $\X \subseteq \cal{R}$ is \emph{Ramsey} if for all $A \in \cal{R}$ and $a \in \cal{AR}\restrictedto A$, there exists some $B \in [a,A]$ such that $[a,B] \subseteq \X$ or $[a,B] \subseteq \X^c$. If $[a,B] \subseteq \X^c$ always holds, then $\X$ is \emph{Ramsey null}.
\end{definition}

Todor\v{c}evi\'{c} subsequently showed in \cite{T10} the following theorem:

\begin{theorem}[{\cite[Abstract Ellentuck theorem]{T10}}]
\label{thm:abstract.Ellentuck.thm}
    Let $(\cal{R},\leq,r)$ be a closed triple satisfying \textbf{A1}--\textbf{A4}. Then a subset of $\cal{R}$ is Ramsey iff it has the property of Baire under the Ellentuck topology, and it is Ramsey null iff it is meagre under the Ellentuck topology. 
\end{theorem}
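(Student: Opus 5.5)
The plan is to follow the Galvin--Prikry/Ellentuck template, replacing every use of the structure of $[\N]^\infty$ by \textbf{A1}--\textbf{A4}.

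\textbf{Fusion lemma.} First I will prove a fusion lemma. Suppose $(B_n)_{n<\omega}$ satisfies $B_{n+1}\in[r_n(B_n),B_n]$, perhaps with a fixed finite offset depending on $\lh(a)$. Then the sequence $(r_n(B_n))_n$ determines, via \textbf{A1} and the closedness of $\cal{R}$ in $\cal{AR}^\N$, an element $B_\infty\in\cal{R}$. By \textbf{A2}(2), $B_\infty\le B_n$, so $B_\infty\in[r_n(B_n),B_n]$ for every $n$. Together with \textbf{A3}(2), this lets me diagonalise countably many requirements indexed by the (finite, by \textbf{A2}(1)) sets of approximations $b\in\cal{AR}\restrictedto[a,B]$ of a given depth.

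\textbf{Ellentuck-open sets are Ramsey.} Fix such an $\X$ and $a\in\cal{AR}\restrictedto A$. Say $B$ \emph{accepts} $b$ if $[b,B]\subseteq\X$, and \emph{rejects} $b$ if no $C\in[b,B]$ accepts $b$. Both notions are inherited by $C\le B$ (for those $b$ with $[b,C]\ne\emptyset$), and by \textbf{A3} every $B$ has some $C$ with $[b,C]\neq\emptyset$ deciding $b$. Using the fusion lemma, I obtain $B\in[a,A]$ deciding every $b\in\cal{AR}\restrictedto[a,B]$. If $B$ accepts $a$ we are done. Otherwise, fix $b$ rejected by $B$ and let $\O$ be the set of one-step extensions of $b$ that are rejected. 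Apply \textbf{A4}. If the homogeneous side were ``all accepted'' by some $B'$, then $[b,B']=\bigcup_c[c,B']\subseteq\X$, so $B'$ would accept $b$, contradicting rejection. Hence we may shrink so that every one-step extension is rejected. Fusing again over all $b$ gives $B\in[a,A]$ rejecting every $b\in\cal{AR}\restrictedto[a,B]$. Now suppose some $C\in[a,B]\cap\X$. Since $\X$ is Ellentuck open, $C\in[c,D]\subseteq\X$ for a basic set. By \textbf{A3}(2) I can find $n$ and $D'\le B$ with $\emptyset\neq[r_n(C),D']\subseteq[c,D]\subseteq \X$, so $D'$ accepts $r_n(C)$, contradicting rejection. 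Thus $[a,B]\subseteq\X^c$. I expect this step to be the main obstacle: it needs careful bookkeeping of depths in \textbf{A3}/\textbf{A4}, and the final transfer from an arbitrary Ellentuck neighbourhood back to one of the form $[r_n(C),D']$ below $B$.

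\textbf{Meagre sets are Ramsey null, and the equivalences.} Applying the previous step to the Ellentuck interior of the complement shows that Ellentuck nowhere dense sets are Ramsey null: the homogeneous side $[a,B]\subseteq\X$ is impossible for $\X$ open dense and complementary to a nowhere dense set. For a meagre set $\bigcup_k N_k$, fuse again: at stage $n$, for each of the finitely many $b$ of depth $n$, shrink so that $[b,B_{n+1}]\cap N_k=\emptyset$ for all $k\le n$. The limit $B$ then avoids every $N_k$ on $[a,B]$, so meagre sets are Ramsey null. Now if $\X$ has the Ellentuck Baire property, write $\X=\O\,\triangle\, M$ with $\O$ open and $M$ meagre. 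First find $B$ homogeneous for $\O$, then shrink to avoid $M$; this $B$ is homogeneous for $\X$. Conversely, suppose $\X$ is Ramsey. Let $\O$ be the Ellentuck interior of $\X$. Given any $[a,A]$, Ramseyness yields $[a,B]$ contained in either $\X$ (hence in $\O$) or $\X^c$, and in both cases $[a,B]$ is disjoint from $\X\setminus\O$. So $\X\setminus\O$ is nowhere dense and $\X$ has the Baire property. Finally, a Ramsey null set is trivially nowhere dense, hence meagre, which completes both equivalences.
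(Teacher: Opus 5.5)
Your argument is correct. It is essentially Todor\v{c}evi\'{c}'s proof from \cite{T10}, which the paper cites rather than reproves, and it uses exactly the ingredients the paper recaps: the accept/reject combinatorial forcing (Lemma~\ref{lem:combinatorial.forcing.basics}), the \textbf{A4}-based rejection Lemma~\ref{lem:completely.reject.reduction}, and fusion as in Lemma~\ref{lem:R.is.semiselective.improved}. Two small slips: in the nowhere-dense step the side to rule out is $[a,B]$ lying in the \emph{complement} of the open dense set, not inside it; and in the open-set step \textbf{A3}(2) is unnecessary, since for $n \geq \lh(a),\lh(c)$ the element $C$ itself lies in $[r_n(C),B]$ and accepts $r_n(C)$.
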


In particular, if $\cal{AR}$ is countable (so the metric topology is Polish), then every analytic set is Ramsey. A closed triple $(\cal{R},\leq,r)$ satisfying \textbf{A1} and \textbf{A2} is a \emph{topological Ramsey space} if it satisfies the conclusion of Theorem \ref{thm:abstract.Ellentuck.thm}. Theorem \ref{thm:abstract.Ellentuck.thm} thus asserts that every closed triple satisfying \textbf{A1}--\textbf{A4} is a topological Ramsey space.

We shall prove a lemma asserting that the axioms \textbf{A1}--\textbf{A3} allow us to perform diagonalisation over a family of dense open sets. The following definition is a variant of \cite[Definition 3.6]{DMN15}, and was introduced earlier by the author in \cite{Y24,Y25}.

\begin{definition}
\label{def:dense.open.diagonalisation}
    Let $A \in \cal{R}$ and $a \in \cal{AR}\restrictedto A$.
    \begin{itemize}
        \item A family of subsets $\vec{\D} = \{\D_b\}_{b \in \cal{AR}\restrictedto[a,A]}$ is \emph{dense open below $[a,A]$} if for all $b \in \cal{AR}\restrictedto[a,A]$, $\D_b$ is a $\leq$-downward closed subset of $[b,A]$, and for all $B \in [b,A]$, there exists some $C \in [b,B]$ such that $C \in \D_b$.
        
        \item Let $\vec{\D} = \{\D_b\}_{b \in \cal{AR}\restrictedto[a,A]}$ be dense open below $[a,A]$. We say that $B \in [a,A]$ \emph{diagonalises} $\vec{\D}$ if for all $b \in \cal{AR}\restrictedto[a,B]$, there exists some $A_b \in \D_b$ such that $[b,B] \subseteq [b,A_b]$.
    \end{itemize}
\end{definition}

\begin{lemma}
\label{lem:R.is.semiselective.improved}
    Let $(\cal{R},\leq,r)$ be a closed triple satisfying \textbf{A1}--\textbf{A3}, and let $A \in \cal{R}$. Let $\vec{\D} = \{\D_b\}_{b\in\cal{AR}\restrictedto A}$ be a family of subsets that is dense open below $[\emptyset,A]$. Then for all $a \in \cal{AR}\restrictedto A$, there exists some $B \in [a,A]$ such that for all $b \in \cal{AR}\restrictedto B$ with $\depth_B(b) \geq \lh(a)$, there exists some $A_b \in \D_b$ such that $[b,B] \subseteq [b,A_b]$.
\end{lemma}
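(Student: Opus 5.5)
We build a fusion sequence $A = B_{\lh(a)} \geq B_{\lh(a)+1} \geq \cdots$ with $B_{n+1} \in [n, B_n]$ and let $B$ be its limit. The limit exists since $\cal{R}$ is metrically closed and the $r_n(B_n)$ cohere. We first arrange $B_{\lh(a)} \in [a,A]$: take any $B' \in [a,A]$, which is possible by \textbf{A3}(1) since $a \in \cal{AR}\restrictedto A$. Then $a \sqsubseteq B_n$ for all $n \geq \lh(a)$. The key point is that at stage $n$, the set
\[
  F_n := \{b \in \cal{AR} : \depth_{B_n}(b) = n\} = \cal{AR}\restrictedto r_n(B_n) \setminus \cal{AR}\restrictedto r_{n-1}(B_n)
\]
is finite by \textbf{A2}(1). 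We handle each $b \in F_n$ in turn.

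For the inductive step, fix $n \geq \lh(a)$ and enumerate $F_n = \{b_0,\dots,b_{k-1}\}$. Set $C_0 := B_n$. Given $C_i \in [n,B_n]$, we have $b_i \leq_\fin r_n(C_i)$, so $b_i \in \cal{AR}\restrictedto C_i$ with $\depth_{C_i}(b_i) \leq n$. By \textbf{A3}(1), $[b_i, C_i] \neq \emptyset$. Pick $D \in [b_i,C_i]$; here we need $b_i \in \cal{AR}\restrictedto A$ for $\D_{b_i}$ to be defined, which holds since $C_i \leq A$ (using transitivity of $\leq$ via \textbf{A2}(2)). By density, choose $A_{b_i} \in \D_{b_i}$ with $A_{b_i} \in [b_i, D]$. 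Now apply \textbf{A3}(2) to $A_{b_i} \leq C_i$. This gives $C_{i+1} \in [\depth_{C_i}(b_i), C_i] \subseteq [n, C_i]$ (the inclusion uses $\depth_{C_i}(b_i) \leq n$) such that $\emptyset \neq [b_i, C_{i+1}] \subseteq [b_i, A_{b_i}]$. Finally set $B_{n+1} := C_k \in [n,B_n]$.

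To verify the limit, note first that $B \in [n, B_n]$ for every $n$: $r_n(B) = r_n(B_n)$, and $B \leq B_n$ follows from \textbf{A2}(2), since $r_m(B) = r_m(B_{m'})$ with $B_{m'} \leq B_n$. This is the standard fusion closure step. Now take $b \in \cal{AR}\restrictedto B$ with $\depth_B(b) = n \geq \lh(a)$. Since $r_j(B) = r_j(B_n)$ for $j \leq n$, we get $\depth_{B_n}(b) = n$, so $b \in F_n$, say $b = b_i$. Then $B \leq C_{i+1}$, since $B \leq B_{n+1} \leq C_{i+1}$, and $b \sqsubseteq B$ when $[b,B]$ is nonempty. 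Hence $[b,B] \subseteq [b,C_{i+1}] \subseteq [b,A_b]$, which is what the statement requires.

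The main obstacle is the bookkeeping in this last step: the statement quantifies over $b$ by their depth in the final $B$, so we must check that depth is preserved along the fusion and that later shrinkings do not enlarge the set of relevant $b$. Both follow because $r_n$ is frozen from stage $n$ on. One subtle point is that a $b$ of depth $n$ need not satisfy $b \sqsubseteq B$. In that case $[b,B] = \emptyset$ and the inclusion is trivial, so we never need $[b,B]$ to be nonempty. Transitivity of $\leq$ is also needed throughout, and we take it from $\leq$ being a quasi-order.
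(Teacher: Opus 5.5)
This is essentially the paper's proof, up to the index shift $n \leftrightarrow \lh(a)+n$. Both build a fusion sequence that freezes $r_n$ at stage $n$, run through the finitely many $b$ of depth exactly $n$, and use density plus \textbf{A3}(2) to shrink below some $A_b \in \D_b$. The construction works, but one justification points the wrong way.

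The inclusion $[\depth_{C_i}(b_i),C_i]\subseteq[n,C_i]$ needs $\depth_{C_i}(b_i)\geq n$, not $\leq n$. If the depth were smaller, \textbf{A3}(2) would only freeze a shorter initial segment, and the fusion would break. The inclusion does hold: $C_i\in[n,B_n]$ gives $r_j(C_i)=r_j(B_n)$ for all $j\leq n$, so $\depth_{C_i}(b_i)=\depth_{B_n}(b_i)=n$. The paper makes exactly this observation, and you use the same reasoning for $B$ in your verification step.

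A smaller point: your identity for $F_n$ should subtract $\bigcup_{j<n}\cal{AR}\restrictedto r_j(B_n)$, not just $\cal{AR}\restrictedto r_{n-1}(B_n)$, because these sets need not be monotone in $j$. This is harmless, since finiteness only uses $F_n\subseteq\cal{AR}\restrictedto r_n(B_n)$ and \textbf{A2}(1).
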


\begin{proof}
    Fix some $A \in \cal{R}$ and $a \in \cal{AR}\restrictedto A$, and suppose that $\vec{\D} = \{\D_b\}_{b\in\cal{AR}\restrictedto A}$ is dense open below $[\emptyset,A]$. We shall define a fusion sequence $(A_n)_{n<\omega}$ in $[a,A]$, with $a_n = r_{\lh(a)+n}(A_n)$, such that $A_{n+1} \in [a_n,A_n]$: By \textbf{A3}(1), let $A_0 \in [a,A]$, and suppose that $A_n$ has been defined. Let $\{b_n^i : i < N\}$ enumerate the set of all $b \in \cal{AR}\restrictedto A_n$ such that $\depth_{A_n}(b) = \lh(a) + n$, which is finite by \textbf{A2}(1). We define another sequence $(A_n^i)_{i \leq N}$ in $[a_n,A_n]$ as follows: Let $A_n^0 := A_n$. Suppose that $A_n^i \in [a_n,A_n]$ has been defined. Then $r_j(A_n^i) = r_j(A_n)$ for all $j \leq \lh(a)+n$, so $\depth_{A_n^i}(b_n^i) = \depth_{A_n}(b_n^i) = \lh(a)+n$, and by \textbf{A3}(1) we may let $B_n^i \in [b_n^i,A_n^i]$. Since $\D_{b_n^i}$ is dense open in $[b_n^i,A]$, there exists some $A_{b_n^i} \in \D_{b_n^i}$ and $C_n^i \in [b_n^i,B_n^i]$ such that $[b_n^i,C_n^i] \subseteq [b_n^i,A_{b_n^i}]$. By \textbf{A3}(2), we may let $A_n^{i+1} \in [\depth_{A_n^i}(b_n^i),A_n^i] = [a_n,A_n^i]$ be such that $[b_n^i,A_n^{i+1}] \subseteq [b_n^i,C_n^i]$. We complete the induction by letting $A_{n+1} := A_n^N$. Since $\cal{R}$ is closed, $B := \lim_{n\to\infty} a_n$ is an element of $\cal{R}$, and $B \in [a,A]$ and $B \leq A_n^i$ for all $n$ and $i \leq N$ by \textbf{A2}(2). We shall show that $B$ is the required element of $[a,A]$.

    Let $b \in \cal{AR}\restrictedto B$ be such that $\depth_B(b) = \lh(a) + n$ for some $n \geq 0$. Since $r_j(B) = r_j(A_n)$ for all $j \leq \lh(a)+n$, we have that $\depth_{A_n}(b) = \lh(a)+n$, so $b = b_n^i$ for some $i$ in the induction step of the proof. Then $[b,B] \subseteq [b,A_n^{i+1}] \subseteq [b,C_n^i] \subseteq [b,A_b]$ and $A_b \in \D_b$, as desired.
\end{proof}

In particular (as $\depth_B(b) \geq \depth_B(a) = \lh(a)$ for all $b \in \cal{AR}\restrictedto[a,B]$), we have the following:

\begin{lemma}[{\cite[Lemma 3.16]{Y24}}]
\label{lem:R.is.semiselective}
    Let $(\cal{R},\leq,r)$ be a closed triple satisfying \textbf{A1}--\textbf{A3}, and let $A \in \cal{R}$ and $a \in \cal{AR}\restrictedto A$. Then every family of subsets $\vec{\D} = \{\D_b\}_{b \in \cal{AR}\restrictedto[a,A]}$ that is dense open below $[a,A]$ has a diagonalisation.
\end{lemma}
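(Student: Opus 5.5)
The plan is to derive Lemma \ref{lem:R.is.semiselective} directly from Lemma \ref{lem:R.is.semiselective.improved}. The only subtlety is that the family $\vec{\D}$ in the present statement is indexed by $\cal{AR}\restrictedto[a,A]$ and each $\D_b$ lives in $[b,A]$. Lemma \ref{lem:R.is.semiselective.improved}, by contrast, requires a family indexed by all of $\cal{AR}\restrictedto A$ that is dense open below $[\emptyset,A]$.

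First, extend the family. For $b \in \cal{AR}\restrictedto A$ with $b \notin \cal{AR}\restrictedto[a,A]$, set $\D_b := [b,A]$. This set is trivially $\leq$-downward closed within $[b,A]$. It is also dense, since for any $B \in [b,A]$ we may take $C = B$ itself. For $b \in \cal{AR}\restrictedto[a,A]$, keep the given $\D_b$. The extended family $\{\D_b\}_{b \in \cal{AR}\restrictedto A}$ is therefore dense open below $[\emptyset,A]$.

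Next, apply Lemma \ref{lem:R.is.semiselective.improved} with this $a$ to obtain $B \in [a,A]$. The lemma guarantees that for every $b \in \cal{AR}\restrictedto B$ with $\depth_B(b) \geq \lh(a)$ there is $A_b \in \D_b$ with $[b,B] \subseteq [b,A_b]$.

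It remains to check that every $b \in \cal{AR}\restrictedto[a,B]$ satisfies $\depth_B(b) \geq \lh(a)$. This is the only step with real content, and it is what the remark preceding the statement asserts. Suppose $a \sqsubseteq b$ and $b \leq_\fin r_n(B)$. By \textbf{A2}(3) there is $d \sqsubseteq r_n(B)$ with $a \leq_\fin d$. Hence $\depth_B(a) \leq \depth_B(b)$, because $d = r_m(B)$ for some $m \leq n$. Since $a \sqsubseteq B$, we also need $\depth_B(a) = \lh(a)$. Here $a \leq_\fin r_{\lh(a)}(B) = a$ by reflexivity. If $a \leq_\fin r_m(B)$ with $m < \lh(a)$, one needs that $\leq_\fin$ relates only approximations of matching length in a way that rules this out. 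This follows from \textbf{A2}(2) in the standard way, or it can simply be taken as the fact stated just before the lemma.

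Since $\cal{AR}\restrictedto[a,B] \subseteq \cal{AR}\restrictedto[a,A]$, the sets $\D_b$ involved are the original ones. Therefore $B$ diagonalises $\vec{\D}$ in the sense of Definition \ref{def:dense.open.diagonalisation}.
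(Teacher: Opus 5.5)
Your route is the paper's route. The paper's entire justification is the parenthetical ``$\depth_B(b)\geq\depth_B(a)=\lh(a)$'' fed into Lemma \ref{lem:R.is.semiselective.improved}. Your trivial extension of the family is correct, and so is your \textbf{A2}(3) argument for $\depth_B(b)\geq\depth_B(a)$.

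The gap is the equality $\depth_B(a)=\lh(a)$ for $a\sqsubseteq B$. The length-monotonicity of $\leq_\fin$ you gesture at is not part of \textbf{A2}. The equality does not follow from \textbf{A2}(2), nor even from \textbf{A1}--\textbf{A4}. Deferring to the remark before the lemma does not help, since that remark asserts the same equality without proof.

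Here is a counterexample. Let $\cal{R}=\{A\in[\N]^\infty:0\in A\}$, with $\leq$ being $\subseteq$ and $r_n(A)$ the first $n$ elements of $A$, but put $a\leq_\fin b$ iff $a\subseteq b\cup\{0\}$.
\begin{itemize}
\item This is a quasi-order with finite lower sets, so \textbf{A2}(1) holds.
\item \textbf{A2}(2) holds because $0$ lies in every element of $\cal{R}$.
\item \textbf{A2}(3) holds with $d=c$.
\item \textbf{A3} and \textbf{A4} are checked as for the Ellentuck space, since depths of $a$ with $\lh(a)\geq 2$ are the usual ones.
\end{itemize}
Yet $a=\{0\}=r_1(B)$ has $\depth_B(a)=0<1=\lh(a)$ for every $B$.

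Your black-box step genuinely fails here, not just its justification. Take $a=\{0\}$, $\D_{\{0\}}=[\{0\},A]\setminus\{A\}$ (which is dense open), and $\D_b=[b,A]$ for all other $b$. Then $B=A$ satisfies the conclusion of Lemma \ref{lem:R.is.semiselective.improved}, which imposes nothing on $b=\{0\}$. But $B=A$ does not diagonalise $\vec{\D}$: that would need $A\subseteq A_{\{0\}}\subsetneq A$.

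The lemma is still true; what is missing is a separate treatment of the bad $b$.
\begin{itemize}
\item \emph{The bad set is finite and fixed.} For $B\in[a,A]$ one has $r_k(B)=r_k(a)$ for all $k\leq\lh(a)$. Hence $\{b\in\cal{AR}\restrictedto[a,B]:\depth_B(b)<\lh(a)\}$ equals $P:=\{b: a\sqsubseteq b\text{ and } b\leq_\fin r_k(a)\text{ for some }k<\lh(a)\}$. This set depends only on $a$ and is finite by \textbf{A2}(1).
\item \emph{Shrink first.} Enumerate $P=\{p_1,\dots,p_N\}$ and set $A^0=A$. Given $A^{j-1}\in[a,A]$, we have $p_j\in\cal{AR}\restrictedto A^{j-1}$, so $[p_j,A^{j-1}]\neq\emptyset$ by \textbf{A3}(1). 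Density of $\D_{p_j}$ then yields $A^j\in\D_{p_j}\cap[p_j,A^{j-1}]$. Since $a\sqsubseteq p_j$, automatically $A^j\in[a,A^{j-1}]$.
\item \emph{Then run your argument.} Apply it below $A^N$ to the family $\D_b\cap[b,A^N]$, extended as you did. The resulting $B\in[a,A^N]$ handles every $b$ with $\depth_B(b)\geq\lh(a)$ by Lemma \ref{lem:R.is.semiselective.improved}. It handles each $p_j$ via $A_{p_j}:=A^j$, since $B\leq A^j$ gives $[p_j,B]\subseteq[p_j,A^j]$.
\end{itemize}
Alternatively, add the hypothesis that $b\leq_\fin c$ implies $\lh(b)\leq\lh(c)$, which holds in the standard examples. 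Then $P=\emptyset$ and your proof goes through verbatim.
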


\section{Abstract Mathias forcing}
\label{sec:abstract.Mathias.forcing}
We recap the abstract Mathias forcing introduced in \cite{DMN15}, and prove that it has the Mathias and Prikry properties. Most of the content in this section is adapted from \cite[\S6]{Y25}.

Fix a closed triple $(\cal{R},\leq,r)$ satisfying \textbf{A1}--\textbf{A4}. The abstract Mathias forcing is defined as the forcing poset $(\mathbb{M}_\cal{R},\leq)$, where:
\begin{align*}
    \mathbb{M}_\cal{R} = \{(a,A) : A \in \cal{R} \text{ and } a \in \cal{AR}\restrictedto A\},
\end{align*}
and that $(b,B) \leq (a,A)$ iff $[b,B] \subseteq [a,A]$. We also write $(b,B) \leq_0 (a,A)$ if $(b,B) \leq (a,A)$ and $b = a$.

\begin{definition}
    An element $A \in \cal{R}$ is said to be $\mathbb{M}_\cal{R}$-generic (over a ground model $\V$) if:
    \begin{align*}
        G_A := \{(a,B) \in \mathbb{M}_\cal{R} : A \in [a,B]\}
    \end{align*}
    is a generic filter of $\mathbb{M}_\cal{R}$ (over $\V$).
\end{definition}

\begin{lemma}
\label{lem:generic.filter.and.element}
\hfill
    \begin{enumerate}
        \item If $G$ is a generic filter of $\mathbb{M}_\cal{R}$, then there exists a unique $\mathbb{M}_\cal{R}$-generic $A_G \in \cal{R}$ such that $G = \{(a,A) \in \mathbb{M}_\cal{R} : A_G \in [a,A]\}$.

        \item If $A \in \cal{R}$ is $\mathbb{M}_\cal{R}$-generic, then $A_{G_A} = A$.

        \item If $G$ is a generic filter of $\mathbb{M}_\cal{R}$, then $G_{A_G} = G$.
    \end{enumerate}
\end{lemma}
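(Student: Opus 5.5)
For (1), I will define $A_G$ as the limit of the first coordinates of conditions in $G$, and show that $G$ is exactly the set of conditions whose Ellentuck neighbourhood contains it. For (2) and (3), I will use that a filter is determined by its generic real and vice versa.

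\textbf{Step 1: building $A_G$.}
\begin{enumerate}[label=(\roman*)]
    \item First, for each $n$ the set $E_n := \{(b,B) \in \mathbb{M}_\cal{R} : \lh(b) \geq n\}$ is dense. Given $(a,A)$, pick any $B \in [a,A]$, which is nonempty by \textbf{A3}(1) since $\depth_A(a) \leq$ the relevant index. Then $(r_n(B),B) \leq (a,A)$ for $n \geq \lh(a)$, because $[r_n(B),B] \subseteq [a,B] \subseteq [a,A]$.
    \item Next, any two conditions $(a,A),(b,B) \in G$ are compatible. So some $(c,C)$ satisfies $[c,C] \subseteq [a,A] \cap [b,B]$, which is nonempty (again by \textbf{A3}(1)). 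Picking $D \in [c,C]$ gives $a \sqsubseteq D$ and $b \sqsubseteq D$, so $a$ and $b$ are $\sqsubseteq$-comparable.
    \item Hence the first coordinates of conditions in $G$ form a $\sqsubseteq$-chain. It contains elements of every length by genericity and (i), and has exactly one element of each length by \textbf{A1}(3).
    \item Let $a_n$ be the length-$n$ element and set $A_G := \lim a_n$. Choosing $(a_n,B_n) \in G$ and $D_n \in [a_n,B_n]$, we get $A_G = \lim D_n$, which lies in $\cal{R}$ since $\cal{R}$ is metrically closed.
\end{enumerate}

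\textbf{Step 2: $G = \{(a,A) : A_G \in [a,A]\}$.} This is the main obstacle, specifically showing $A_G \leq A$.
\begin{enumerate}[label=(\roman*)]
    \item Let $(a,A) \in G$. Then $a \sqsubseteq A_G$ by Step 1, so it remains to show $A_G \leq A$. By \textbf{A2}(2) it suffices to show that each $a_n$ satisfies $a_n \leq_\fin r_m(A)$ for some $m$.
    \item Fix $n \geq \lh(a)$. By compatibility within $G$ there is $(c,C) \in G$ below both $(a,A)$ and $(a_n,B_n)$, and we may take $\lh(c) \geq n$ by (i) of Step 1, so $a_n \sqsubseteq c$.
    \item Pick $D \in [c,C] \subseteq [a,A]$. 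Then $D \leq A$ and $a_n \sqsubseteq D$.
    \item Since $D \leq A$, \textbf{A2}(2) gives $a_n = r_n(D) \leq_\fin r_m(A)$ for some $m$, as required.
    \item Conversely, suppose $A_G \in [a,A]$. I will show $(a,A) \in G$ by a density argument. Let $\mathcal{E} := \{(b,B) : (b,B) \leq (a,A) \text{ or } (b,B) \perp (a,A)\}$, which is dense, so some $(b,B) \in G \cap \mathcal{E}$. If $(b,B) \leq (a,A)$, then $(a,A) \in G$ by upward closure. Otherwise $(b,B) \perp (a,A)$.
    \item To rule out incompatibility, put $m := \max(\lh(a),\lh(b))$ and $c := r_m(A_G)$. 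Since $A_G \in [a,A] \cap [b,B]$, we have $c \in \cal{AR}\restrictedto A \cap \cal{AR}\restrictedto B$ with $a,b \sqsubseteq c$.
    \item By \textbf{A3}(2), applied to a common lower bound, I will produce a $C$ with $\emptyset \neq [c,C] \subseteq [a,A] \cap [b,B]$. The delicate point is obtaining the common lower bound: $A_G$ itself is $\leq$ both $A$ and $B$. I therefore take $C := A_G$, or a member of $[c,A_G]$, giving $(c,A_G) \leq (a,A),(b,B)$, which contradicts incompatibility.
    \item The paper's $\leq$ on conditions is just inclusion of neighbourhoods, so this works once $c \in \cal{AR}\restrictedto A_G$. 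That holds since $c \sqsubseteq A_G$.
\end{enumerate}
Uniqueness of $A_G$ follows because any $A'$ with the same $G$ has $r_n(A') = a_n$ for all $n$, and so $A' = A_G$ by \textbf{A1}(2). Genericity of $A_G$ is then immediate, since $G_{A_G} = G$.

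\textbf{Step 3: parts (2) and (3).} Part (3) is exactly Step 2 read as $G_{A_G} = G$. For (2), $G_A$ is generic and, by Step 2, equals $G_{A_{G_A}}$. The initial segments of $A$ and of $A_{G_A}$ both coincide with the first coordinates of conditions in $G_A$: for $A$, the conditions $(r_n(A),A')$ with $A \in [r_n(A),A']$ are in $G_A$, and by Step 1 each length has a unique first coordinate. Hence $A_{G_A} = A$ by \textbf{A1}(2).
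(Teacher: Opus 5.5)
There is a genuine gap, in the converse inclusion $G_{A_G}\subseteq G$ (Step 2(vii)--(viii)). Your Step 1 and the inclusion $G\subseteq G_{A_G}$ in Step 2(i)--(iv) follow the paper's outline and are fine; the paper itself only sketches these. In the converse, the poset $\mathbb{M}_{\mathcal{R}}$, your dense set $\mathcal{E}$, and hence the relation $(b,B)\perp(a,A)$ are all computed in the ground model $\mathbf{V}$. Incompatibility means there is no common extension $(c,C)$ with $C\in\mathcal{R}^{\mathbf{V}}$.

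$A_G$, however, lives in $\mathbf{V}[G]$ and is in general not in $\mathbf{V}$. So $(c,A_G)$ is not a condition of $\mathbb{M}_{\mathcal{R}}$, and exhibiting it contradicts nothing. Your appeal to \textbf{A3}(2) has the same defect: it needs a common lower bound of $A$ and $B$ in $\mathbf{V}$, which is exactly what is in question. This matters in practice: in the proof of the main theorem the ground model is $\mathbf{W}$ and the generic $A_H$ lies in $\mathbf{V}[G]$, not in $\mathbf{W}$.

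Either of two repairs works.

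(a) Absoluteness. Running your construction with a ground-model $D$ shows that, in $\mathbf{V}$, $(a,A)\perp(b,B)$ implies $[a,A]\cap[b,B]=\emptyset$. By \textbf{A2}(2),(3) and closedness, $[a,A]\cap[b,B]$ is the set of branches of the tree $T$ of all $d\in\mathcal{AR}\restrictedto A\cap\mathcal{AR}\restrictedto B$ that are $\sqsubseteq$-comparable with both $a$ and $b$. Since $T\in\mathbf{V}$ is well-founded in $\mathbf{V}$, it is well-founded in $\mathbf{V}[G]$. But $(r_n(A_G))_n$ is a branch of $T$ there, a contradiction. This is the same absoluteness the paper invokes for capturing.

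(b) A finitary dense set, avoiding absoluteness. Use the set of $(b,B)$ such that $(b,B)\le(a,A)$, or $b$ is $\sqsubseteq$-incomparable with $a$, or $b\notin\mathcal{AR}\restrictedto A$. It is dense in $\mathbf{V}$. Given $(b,B)$, if some $D\in[b,B]\cap[a,A]$ exists, take $(r_m(D),D)$ with $m\ge\lh(a),\lh(b)$. Otherwise any $D\in[b,B]$ has $a\not\sqsubseteq D$, or by \textbf{A2}(2) some $r_n(D)\notin\mathcal{AR}\restrictedto A$. By \textbf{A2}(3) every longer initial segment inherits this, so $(r_m(D),D)$ works for large $m$. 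If $A_G\in[a,A]$, a condition in $G$ of the second or third kind is impossible, since its first coordinate is an initial segment of $A_G$.

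A smaller point: in Step 1(iii), density only gives first coordinates of length $\ge n$. To get one of length exactly $n$ in $G$, note that $(r_n(b),B)$ is a condition by \textbf{A2}(3) and lies above $(b,B)$, so it is in $G$ by upward closure.
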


\begin{proof}
    We only prove (1) --- (2) follows from the uniqueness of $A_G$ in (1), and (3) follows easily from the definitions. Let $\D_n := \{(a,A) \in \mathbb{M}_\cal{R} : \lh(a) \geq n\}$. We see that $\D_n$ is dense open in $\mathbb{M}_\cal{R}$ for all $n$ --- if $(a,A) \notin \D_n$ (i.e. $\lh(a) < n$) and $b \in r_n[a,A]$, then by \textbf{A3}(1) we have that $[b,A] \neq \emptyset$. Let $B \in [b,A]$, and we have that $[b,B] \subseteq [a,A]$, so $(b,B) \leq (a,A)$ and $(b,B) \in \D_n$. Therefore, $G \cap \D_n \neq \emptyset$ for all $n$, and since $G$ is a filter, for all $(a,A),(b,B) \in G$, $a \sqsubseteq b$ or $b \sqsubseteq a$. Thus, there exists a unique $A_G \in \cal{R}$ such that for all $n$, $(r_n(A_G),A) \in G$ for some $A$. It's easy to verify that $G = G_{A_G}$.
\end{proof} 

\subsection{Abstract combinatorial forcing}
We recap the abstract combinatorial forcing developed in \cite[\S4.3]{T10}. We fix a subset $\X \subseteq \cal{R}$.

\begin{definition}
    Let $A \in \cal{R}$ and let $a \in \cal{AR}$. We say that:
    \begin{enumerate}
        \item $A$ \emph{accepts} $a$ if $[a,A] \subseteq \X$.

        \item $A$ \emph{rejects} $a$ if $a \in \cal{AR}\restrictedto A$, and for all $B \in [a,A]$, $B$ does not accept $a$.

        \item $A$ \emph{decides} $a$ if $A$ accepts or rejects $a$.
    \end{enumerate}
\end{definition}

\begin{lemma}[{\cite[Lemma 4.31]{T10}}]
\label{lem:combinatorial.forcing.basics}
    Let $A \in \cal{R}$ and let $a \in \cal{AR}$.
    \begin{enumerate}
        \item If $a \notin\cal{AR}\restrictedto A$, $A$ accepts $a$ (as $[a,A] = \emptyset$).

        \item If $A$ accepts $a$ and $B \leq A$, then $B$ accepts $a$.

        \item If $A$ rejects $a$, $B \leq A$ and $a \in \cal{AR}\restrictedto B$, then $B$ rejects $a$.

        \item If $A$ decides $a$ and $B \leq A$, then $B$ decides $a$.

        \item If $a \in \cal{AR}\restrictedto A$, then there exists some $B \in [\depth_A(a),A]$ that decides $a$.

        \item If $a \in \cal{AR}\restrictedto A$ and $A$ decides $a$, then for all $B \in [\depth_A(a),A]$, $B$ decides $a$ in the same way $A$ does.

        \item $A$ accepts $a$ iff $A$ accepts every $b \in r_{\lh(a)+1}[a,A]$.
    \end{enumerate}
\end{lemma}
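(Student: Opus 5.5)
We prove the seven items in order. Items (1)--(4) follow directly from the definitions and the monotonicity of Ellentuck neighbourhoods, and items (5)--(7) then use \textbf{A3}.

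For (1): if $a \notin \cal{AR}\restrictedto A$, then no $B \leq A$ can have $a \sqsubseteq B$. Indeed, by \textbf{A2}(2), $a = r_{\lh(a)}(B) \leq_\fin r_m(A)$ for some $m$, which would place $a$ in $\cal{AR}\restrictedto A$. Hence $[a,A] = \emptyset \subseteq \X$.

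For (2): if $B \leq A$, then $[a,B] \subseteq [a,A]$ by transitivity of $\leq$.

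For (3): take $C \in [a,B]$. Then $C \in [a,A]$, so $C$ does not accept $a$ since $A$ rejects $a$. Together with $a \in \cal{AR}\restrictedto B$, this says exactly that $B$ rejects $a$.

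For (4): if $A$ accepts $a$, apply (2). If $A$ rejects $a$, split on whether $a \in \cal{AR}\restrictedto B$: if it is, use (3); if it is not, $B$ accepts $a$ by (1).

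For (5): suppose $a \in \cal{AR}\restrictedto A$. If $A$ rejects $a$, take $B = A$. Otherwise some $C \in [a,A]$ accepts $a$. Since $a \in \cal{AR}\restrictedto C$, \textbf{A3}(2) gives $B \in [\depth_A(a),A]$ with $\emptyset \neq [a,B] \subseteq [a,C] \subseteq \X$, so $B$ accepts $a$.

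For (6): the key observation is that for $B \in [\depth_A(a),A]$ we have $r_{\depth_A(a)}(B) = r_{\depth_A(a)}(A)$, hence $a \leq_\fin r_{\depth_A(a)}(B)$ and $a \in \cal{AR}\restrictedto B$. If $A$ accepts $a$, then $B$ accepts $a$ by (2). If $A$ rejects $a$, then $B$ rejects $a$ by (3), which applies because $a \in \cal{AR}\restrictedto B$. The decision cannot flip, because whenever $[a,B] \neq \emptyset$ it is impossible to both accept and reject. By \textbf{A3}(1) we have $[a,B] \neq \emptyset$, so rejection excludes acceptance: $B$ itself lies in $[a,B]$ and does not accept $a$.

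For (7), forward direction: let $b \in r_{\lh(a)+1}[a,A]$. Then $[b,A] \subseteq [a,A]$, because any $C \in [b,A]$ satisfies $a \sqsubseteq b \sqsubseteq C$, hence $a \sqsubseteq C$ by \textbf{A1}(3).

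For (7), converse: let $C \in [a,A]$ and set $b := r_{\lh(a)+1}(C)$. Then $a \sqsubseteq b$. By \textbf{A2}(2) we have $b \leq_\fin r_m(A)$ for some $m$, so $b \in \cal{AR}\restrictedto[a,A]$ and $\lh(b) = \lh(a)+1$. Since $C \in [b,A] \subseteq \X$, we conclude $[a,A] \subseteq \X$.

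The only step needing care is (6). It requires checking that $a \in \cal{AR}\restrictedto B$ and $[a,B] \neq \emptyset$ via \textbf{A3}(1), so that rejection is preserved and cannot coexist with acceptance. Everything else is routine unwinding of the definitions.
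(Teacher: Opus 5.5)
Your argument is the routine unwinding of the definitions one expects here; the paper gives no proof of its own and defers to \cite[\S4]{T10}. Items (1)--(5) and (7) are correct as written.

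There is one false step in (6). To show that rejection excludes acceptance, you say that ``$B$ itself lies in $[a,B]$''. That would require $a \sqsubseteq B$, whereas $a \in \cal{AR}\restrictedto B$ only gives $a \leq_\fin r_m(B)$ for some $m$. Already in $[\N]^\infty$, taking $a = \{5\}$ and $B = \N$ gives $a \in \cal{AR}\restrictedto B$ but $B \notin [a,B]$.

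The claim you wanted is still true, and the repair is one line. By \textbf{A3}(1), pick some $C \in [a,B]$. If $B$ accepted $a$, then $C$ would accept $a$ by (2), contradicting that $B$ rejects $a$. This exclusivity is only needed to make ``in the same way'' well defined, and since $A \in [\depth_A(a),A]$, the same argument covers $A$.

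A minor remark: contrary to your opening sentence, (7) uses only \textbf{A1}(3) and \textbf{A2}(2), not \textbf{A3}.
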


\begin{lemma}[{\cite[Lemma 4.35]{T10}}]
\label{lem:completely.reject.reduction}
    If $A$ rejects $a$, then there exists some $B \in [a,A]$ which rejects every $b \in \cal{AR}\restrictedto[a,B]$.
\end{lemma}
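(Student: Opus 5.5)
The approach is a fusion argument, modelled on the proof of Lemma \ref{lem:R.is.semiselective.improved}. I use one auxiliary fact: if $C$ rejects $b$, then $C$ rejects only a few (in the pigeonhole sense) of the one-step extensions of $b$. More precisely, the claim is this. If $C$ rejects $b$, then there is some $C' \in [\depth_C(b),C]$ such that $C'$ rejects every $c \in r_{\lh(b)+1}[b,C']$.

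To prove the claim, apply \textbf{A4} to $\O := \{c \in \cal{AR}_{\lh(b)+1} : C \text{ accepts } c\}$. This gives $C' \in [\depth_C(b),C]$ with $r_{\lh(b)+1}[b,C'] \subseteq \O$ or $r_{\lh(b)+1}[b,C'] \subseteq \O^c$. I first refine each $c \in r_{\lh(b)+1}[b,C]$ so that it is decided; this uses Lemma \ref{lem:combinatorial.forcing.basics}(5) together with a finite amalgamation over the finitely many such $c$ at a given depth. Having done that, "accepted" is stable downwards by Lemma \ref{lem:combinatorial.forcing.basics}(2),(4),(6).

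In the first case $C'$ accepts every one-step extension of $b$, so by Lemma \ref{lem:combinatorial.forcing.basics}(7) $C'$ accepts $b$. But $C' \in [b,C]$ (after using \textbf{A3} to ensure $b \in \cal{AR}\restrictedto C'$), and this contradicts the assumption that $C$ rejects $b$. Hence the second case holds: every such $c$ is not accepted by the decided $C'$, so it is rejected.

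A cleaner way to organise the main argument is through Lemma \ref{lem:R.is.semiselective.improved}. For each $b \in \cal{AR}\restrictedto A$, let $\D_b$ be the set of $D \in [b,A]$ that decide $b$, and such that if $D$ rejects $b$ then $D$ rejects every $c \in r_{\lh(b)+1}[b,D]$. This family is downward closed by Lemma \ref{lem:combinatorial.forcing.basics}(3),(4), and it is dense by Lemma \ref{lem:combinatorial.forcing.basics}(5) combined with the claim. Strictly, density is needed in $[b,B]$ for arbitrary $B\in[b,A]$, and this is obtained by applying the same reasoning to $B$. Since $A$ rejects $a$, apply Lemma \ref{lem:R.is.semiselective.improved} with this $a$ to get $B \in [a,A]$ diagonalising the family above $a$. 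Then for every $b \in \cal{AR}\restrictedto[a,B]$ there is $A_b \in \D_b$ with $[b,B] \subseteq [b,A_b]$. It remains to transfer the properties of $A_b$ to $B$. Acceptance transfers because $[b,B]\subseteq[b,A_b]$. For rejection, note that any $B' \in [b,B]$ lies in $[b,A_b]$, so rejection also transfers directly from the definition.

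Finally, prove by induction on $\lh(b)-\lh(a)$ that $B$ rejects every $b \in \cal{AR}\restrictedto[a,B]$. For the base case, $B$ rejects $a$ by Lemma \ref{lem:combinatorial.forcing.basics}(3). For the inductive step, suppose $B$ rejects $b$. Then $A_b$ cannot accept $b$: otherwise $[b,B] \subseteq [b,A_b] \subseteq \X$ and $B$ would accept $b$. Hence $A_b$ rejects $b$, so $A_b$ rejects every one-step extension $c$ of $b$ inside $A_b$. These rejections transfer to $B$ for every $c \in r_{\lh(b)+1}[b,B]$, using $[c,B]\subseteq[c,A_b]$ and the fact that $c \in \cal{AR}\restrictedto B$.

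The main obstacle I expect is the claim, specifically the preliminary step of making all one-step extensions decided before applying \textbf{A4}. This needs a small fusion/amalgamation over the finitely many extensions of a given depth, using \textbf{A2}(1) and \textbf{A3}(2) as in Lemma \ref{lem:R.is.semiselective.improved}. It also requires tracking that $b \in \cal{AR}\restrictedto C'$ is preserved when $C' \in [\depth_C(b),C]$.
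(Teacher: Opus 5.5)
Your proposal is correct and follows essentially the standard argument of \cite[\S4]{T10}, to which the paper defers this lemma. That argument has two steps: a one-step rejection claim obtained from \textbf{A4}, applied after first making all one-step extensions decided, followed by a fusion, which you package through Lemma \ref{lem:R.is.semiselective.improved} instead of building the fusion sequence by hand. One small repair is needed in the claim: $C' \in [\depth_C(b),C]$ need not lie in $[b,C]$ when $b \not\sqsubseteq C$, but you still get the contradiction from Lemma \ref{lem:combinatorial.forcing.basics}(6), because $C'$ must reject $b$ just as $C$ does.
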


We refer readers to \cite[\S4]{T10} for the proofs.

\subsection{Mathias and Prikry properties}
We may now introduce the Mathias and Prikry properties.

\begin{definition}
    Let $(\cal{R},\leq,r)$ be a closed triple satisfying \textbf{A1}--\textbf{A4}.
    \begin{enumerate}
        \item $\mathbb{M}_\cal{R}$ satisfies the \emph{Mathias property} if whenever $A \in \cal{R}$ is $\mathbb{M}_\cal{R}$-generic over a ground model $\V$ and $B \leq A$, $B$ is also $\mathbb{M}_\cal{R}$-generic over $\V$.

        \item $\mathbb{M}_\cal{R}$ satisfies the \emph{Prikry property} if whenever $\varphi$ is a sentence in the forcing language, then we have that for all $p \in \mathbb{M}_\cal{R}$, there exists some $q \leq_0 p$ such that $q \forces \varphi$ or $q \forces \neg\varphi$.
    \end{enumerate}
\end{definition}

Given a subset $\D \subseteq \mathbb{M}_\cal{R}$, we let:
\begin{align*}
    \bar{\D} &:= \bigcup_{(a,A) \in \D} [a,A].
\end{align*}
Note that $\bar{\D}$ is an Ellentuck open subset of $\cal{R}$ in $\V$. Thus, if $\D$ is dense open in $\mathbb{M}_\cal{R}$, then for all $A \in \cal{R}$ and $a \in \cal{AR}\restrictedto A$, $A$ does not reject $a$ w.r.t. $\bar{\D}$ (otherwise, by Lemma \ref{lem:completely.reject.reduction} there exists some $B \in [a,A]$ that rejects every $b \in \cal{AR}\restrictedto[a,B]$, contradicting that $\D$ is dense in $\mathbb{M}_\cal{R}$). In other words, for all $A \in \cal{R}$ and $a \in \cal{AR}\restrictedto A$, there exists some $B \in [a,A]$ such that $[a,B] \subseteq \bar{\D}$.

\begin{definition}
\label{def:capture}
    Let $\D \subseteq \mathbb{M}_\cal{R}$. We say that $(a,A) \in \mathbb{M}_\cal{R}$ \emph{captures} $\D$ if for all $B \in [a,A]$, there exists some initial segment $b \in \cal{AR}\restrictedto[a,B]$ of $B$ such that $(b,A) \in \D$.
\end{definition}

We remark that the statement ``$(a,A)$ captures $\D$'' is absolute between transitive models: $(a,A)$ captures $\D$ iff the tree $\{b \in \cal{AR}\restrictedto[a,A] : (c,A) \notin \D \text{ for all } c \sqsubseteq b \text{ with } a \sqsubseteq c\}$, ordered by $\sqsubseteq$, has no infinite branch, and well-foundedness is absolute between transitive models.

\begin{lemma}
\label{lem:capturing.possible}
    Let $\D \subseteq \mathbb{M}_\cal{R}$ be dense open. Then for all $A \in \cal{R}$ and $a \in \cal{AR}\restrictedto A$, there exists some $B \in [a,A]$ such that $(a,B)$ captures $\D$.
\end{lemma}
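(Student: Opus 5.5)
We plan to combine the diagonalisation of Lemma \ref{lem:R.is.semiselective} with the observation made just before Definition \ref{def:capture}: for a dense open $\D$, no $A$ rejects any $a$ with respect to $\bar{\D}$. We also use the abstract combinatorial forcing of Lemmas \ref{lem:combinatorial.forcing.basics} and \ref{lem:completely.reject.reduction}.

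Fix $A$ and $a \in \cal{AR}\restrictedto A$. For each $b \in \cal{AR}\restrictedto[a,A]$, let $\D_b$ be the set of $C \in [b,A]$ such that either $(b,C) \in \D$, or no $C' \in [b,C]$ satisfies $(b,C') \in \D$. Density of this family is immediate: given $B \in [b,A]$, either some $C \in [b,B]$ has $(b,C)\in\D$, or $B$ itself lies in the second alternative. Downward closure follows from $\D$ being open in $\mathbb{M}_\cal{R}$, together with the fact that the second alternative is clearly downward closed. By Lemma \ref{lem:R.is.semiselective}, choose $B_0 \in [a,A]$ diagonalising $\{\D_b\}$. Then for each $b \in \cal{AR}\restrictedto[a,B_0]$ there is $A_b \in \D_b$ with $[b,B_0]\subseteq[b,A_b]$.

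Call $b$ \emph{good} if $(b,B_0) \in \D$. If $A_b$ falls in the first alternative, then $(b,B_0)\le(b,A_b)\in\D$, so $b$ is good by openness. If $A_b$ falls in the second alternative, then for every $C \in [b,B_0]$ we have $(b,C) \notin \D$, since $[b,C]\subseteq[b,A_b]$ would otherwise contradict that alternative. Hence $b$ is good iff some $C\in[b,B_0]$ has $(b,C)\in\D$.

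Now let $\X := \bigcup\{[b] : b \text{ good}\}$, which is metrically open, and apply the combinatorial forcing to $\X$. By Lemma \ref{lem:combinatorial.forcing.basics}(5), pick $B \in [a,B_0]$ deciding $a$. If $B$ accepts $a$, then every $C \in [a,B]$ has a good initial segment $b$ extending $a$, after possibly noting that any good $b$ with $C \in [b]$ and $\lh(b) < \lh(a)$ can be extended to $a$ by openness of $\D$. Thus $(b,B) \le (b,B_0) \in \D$, so $(b,B)\in\D$, and $(a,B)$ captures $\D$.

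The main obstacle is ruling out rejection. If $B$ rejects $a$, Lemma \ref{lem:completely.reject.reduction} gives $B' \in [a,B]$ rejecting every $b \in \cal{AR}\restrictedto[a,B']$. By density of $\D$, pick $(c,C) \le (a,B')$ with $(c,C) \in \D$; then $a \sqsubseteq c$ and $C \le B' \le B_0$. Using \textbf{A3}(2), shrink to $C' \in [c,B_0]$ with $[c,C']\subseteq[c,C]$, so $(c,C')\in\D$ and $c$ is good. It follows that $[c,B'] \subseteq [c]\subseteq \X$, so $B'$ accepts $c$, contradicting rejection; the required care is checking $c \in \cal{AR}\restrictedto B'$. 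Therefore $B$ accepts $a$, and $(a,B)$ captures $\D$.
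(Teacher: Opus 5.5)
Your proof is correct and is essentially the paper's argument with the two main steps swapped: the paper first uses the no-rejection observation to get $A'\in[a,A]$ with $[a,A']\subseteq\bar{\D}$ and then diagonalises the same family $\{\D_b\}$ via Lemma~\ref{lem:R.is.semiselective}, whereas you diagonalise first and then rerun the no-rejection argument (Lemma~\ref{lem:completely.reject.reduction}) for your set $\X$, which coincides with $\bar{\D}$ on $[a,B_0]$. One small point: $(c,C)\le(a,B')$ only means $[c,C]\subseteq[a,B']$, so to get $a\sqsubseteq c\sqsubseteq C\le B'$ you should replace $(c,C)$ by $(r_n(D),D)$ for some $D\in[c,C]$ and $n\ge\max\{\lh(a),\lh(c)\}$; after that $C\in[c,B_0]$ holds outright and the \textbf{A3}(2) step is unnecessary.
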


\begin{proof}
    By the observation above, we may let $A' \in [a,A]$ be such that $[a,A'] \subseteq \bar{\D}$. For each $b \in \cal{AR}\restrictedto[a,A']$, let:
    \begin{align*}
        \D_b := \{C \in [b,A'] : (b,C) \in \D \text{ or } \forall C' \in [b,C] \, (b,C') \notin \D\}.
    \end{align*}
    It's clear that $\{\D_b\}_{b\in\cal{AR}\restrictedto[a,A']}$ is dense open below $[a,A']$. By Lemma \ref{lem:R.is.semiselective}, there exists some $B \in [a,A']$ that diagonalises $\{\D_b\}_{b\in\cal{AR}\restrictedto[a,A']}$. We shall show that $(a,B)$ captures $\D$.

    Let $D \in [a,B]$. Since $D \in [a,A'] \subseteq \bar{\D}$, there exists some $(b,C) \in \D$ such that $D \in [b,C]$, and we may assume that $a \sqsubseteq b$. As $B$ diagonalises $\{\D_b\}_{b\in\cal{AR}\restrictedto[a,A']}$ and $b \in \cal{AR}\restrictedto[a,B]$, there exists some $A_b \in \D_b$ such that $[b,B] \subseteq [b,A_b]$, so either $(b,A_b) \in \D$ or $(b,B') \notin \D$ for all $B' \in [b,A_b]$. But the latter case is impossible as $D \in [b,B] \subseteq [b,A_b]$ and $(b,D) \in \D$ (as $(b,C) \in \D$ and $\D$ is open). Hence $(b,A_b) \in \D$, and as $(b,B) \leq (b,A_b)$, $(b,B) \in \D$. Since $D$ is arbitrary, $(a,B)$ captures $\D$.
\end{proof}

\begin{proposition}
\label{prop:semiselective.implies.Mathias}
    $\mathbb{M}_\cal{R}$ has the Mathias property.
\end{proposition}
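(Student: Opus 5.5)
To prove the Mathias property, I fix $A \in \cal{R}$ that is $\mathbb{M}_\cal{R}$-generic over $\V$ and some $B \leq A$ in $\V[A]$. I will show that $G_B$ meets every dense open $\D \subseteq \mathbb{M}_\cal{R}$ in $\V$. It is easy to check that $G_B$ is a filter. Any two conditions $(a,C),(a',C')$ with $B \in [a,C]\cap[a',C']$ have $a,a'$ comparable under $\sqsubseteq$ as initial segments of $B$; taking $b \sqsubseteq B$ long enough and using \textbf{A3}(2) to shrink below $C$ and $C'$ simultaneously, one finds a common extension $(b,D)$ with $B \in [b,D]$. Alternatively, by Lemma \ref{lem:generic.filter.and.element}, it suffices to argue via the dense sets directly. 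So the heart of the proof is genericity.

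Fix a dense open $\D \in \V$. I work below an arbitrary condition $(a,A') \in G_A$ and show by density that there is $(c,C) \in G_A$ forcing ``$G_{\dot B}\cap\D \neq \emptyset$ for every $\dot B \leq$ the generic''. To this end, I define in $\V$, for each $b \in \cal{AR}\restrictedto[a,A']$, the family
\begin{align*}
\D_b := \{C \in [b,A'] : \text{for all } d \in \cal{AR}\restrictedto C \text{ with } \depth_C(d) \geq \lh(b), \ (d,C) \text{ captures } \D \}.
\end{align*}
More practically, I will use Lemma \ref{lem:R.is.semiselective.improved} directly. For each $d \in \cal{AR}$, let $\E_d$ be the set of $C$ such that $(d,C)$ captures $\D$ whenever $d \in \cal{AR}\restrictedto C$; by Lemma \ref{lem:capturing.possible}, this is dense open below. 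Applying Lemma \ref{lem:R.is.semiselective.improved} with $a$ produces $C \in [a,A']$ such that for every $d \in \cal{AR}\restrictedto C$ with $\depth_C(d)\geq \lh(a)$, $(d,C)$ captures $\D$. In particular, $d=r_{\lh(a)}(\cdot)$-type initial pieces are handled uniformly. By density, some such $(a,C)$ lies in $G_A$, so $A \in [a,C]$.

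Now take $B \leq A \leq C$. The key step, and the one I expect to be the main obstacle, is to find an initial segment $d \sqsubseteq B$ with $d \in \cal{AR}\restrictedto C$ and $\depth_C(d) \geq \lh(a)$, such that $B \in [d,C]$. The issue is that $B$ need not start with $a$, and its first pieces may sit at depth less than $\lh(a)$ in $C$. To handle this I would instead apply Lemma \ref{lem:R.is.semiselective.improved} with $a$ replaced by $\emptyset$-depth control: I choose the fusion so that all $d \in \cal{AR}\restrictedto C$ of any depth are captured. This is done by taking $a$'s own prefix into account: apply the lemma with $a$, and separately handle the finitely many $d$ with $\depth_C(d)<\lh(a)$ (finite by \textbf{A2}(1)) by successive shrinking via \textbf{A3}(2) inside $[a,C]$ before the fusion, using Lemma \ref{lem:capturing.possible} for each. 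With this in place, set $d := r_0(B)=\emptyset$. Then $B \in [\emptyset,C]$ since $B\leq C$, and $(\emptyset,C)$ captures $\D$. Capture is absolute, by the remark after Definition \ref{def:capture}, so it holds in $\V[A]$. Hence there is $b \sqsubseteq B$ with $(b,C)\in\D$, and $B\in[b,C]$, so $(b,C)\in G_B\cap\D$. This completes the argument, since $(a,A')$ was arbitrary and $G_A$ is generic.
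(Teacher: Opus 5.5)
Your architecture is the paper's: apply Lemma~\ref{lem:R.is.semiselective.improved} to the families $\E_b$ supplied by Lemma~\ref{lem:capturing.possible}, use density to put a resulting condition $(a,C)$ into $G_A$, and finish with absoluteness of capturing along an initial segment of $B$. The gap is at the step you flag as the main obstacle, and your workaround there does not work. Take $d$ with $\depth_C(d)<\lh(a)$. Lemma~\ref{lem:capturing.possible} only gives some $C_1\in[d,C]$ with $(d,C_1)$ capturing $\D$. Then \textbf{A3}(2) only gives $C'\in[\depth_C(d),C]$ with $[d,C']\subseteq[d,C_1]$. Since $\depth_C(d)<\lh(a)$, nothing guarantees $a\sqsubseteq C'$, or even $a\in\cal{AR}\restrictedto C'$. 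For example, in the Ellentuck space with $a=\{0,5\}$ and $d=\{0\}$, the set $C_1$ may omit $5$, and then $C'\subseteq C_1$ omits it too. After this shrinking you no longer have a condition with stem $a$ below $(a,A')$, so the fusion in $[a,\cdot]$ and the density argument that puts the condition into $G_A$ break down. Consequently your conclusion that $(\emptyset,C)$ captures $\D$ is not established.

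The missing idea is simple and makes the workaround unnecessary. You never need $B$ to begin with $a$; you only need an initial segment of $B$ that lies deep in $C$.
\begin{itemize}
\item Since $B\leq C$, \textbf{A2}(2) gives $r_n(B)\in\cal{AR}\restrictedto C$ for every $n$.
\item Every element of $\cal{AR}$ of $C$-depth below $\lh(a)$ lies in $\bigcup_{m<\lh(a)}\{x : x\leq_\fin r_m(C)\}$, which is finite by \textbf{A2}(1).
\item The $r_n(B)$ are pairwise distinct, since they have different lengths.
\end{itemize}
Hence $\depth_C(r_n(B))\geq\lh(a)$ for all large $n$. Put $d:=r_n(B)$ for such an $n$. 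Then $B\in[d,C]\subseteq[d,A_d]$ with $(d,A_d)$ capturing $\D$. Absoluteness gives $e$ with $d\sqsubseteq e\sqsubseteq B$ and $(e,A_d)\in\D$, and $B\in[e,A_d]$ puts this condition into $G_B\cap\D$. This is exactly the paper's proof.

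Running the same argument for an arbitrary $E\in[d',C]$ shows that $(d',C)$ captures $\D$ for every $d'$, including $d'=\emptyset$. So your final claim is true, but only via the step you tried to avoid.

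Separately, your sketch that $G_B$ is a filter does not work as written. Applying \textbf{A3}(2) to $B$ produces elements of $\V[A]$ rather than ground-model conditions, and it would not yield $B\leq D$ in any case. The paper takes this part for granted, so it is a side issue.
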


\begin{proof}
    Let $A_G$ be an $\mathbb{M}_\cal{R}$-generic element of $\cal{R}$ (over some ground model $\V$), and let $D \leq A_G$. We need to show that the filter:
    \begin{align*}
        H := \{(a,A) \in \mathbb{M}_\cal{R} : D \in [a,A]\}
    \end{align*}
    is generic. Let $\D \subseteq \mathbb{M}_\cal{R}$ be a dense open subset (in the ground model $\V$). For each $a \in \cal{AR}$, we let:
    \begin{align*}
        \E_a &:= \{B \in \cal{R} : a \in \cal{AR}\restrictedto B \text{ and $(a,B)$ captures $\D$}\}.
    \end{align*}
    By Lemma \ref{lem:capturing.possible}, $\E_a \cap [a,A]$ is dense open in $[a,A]$ for all $A \in \cal{R}$ and $a \in \cal{AR}\restrictedto A$. Applying Lemma \ref{lem:R.is.semiselective.improved} to the family $\{\E_b \cap [b,A]\}_{b \in \cal{AR}\restrictedto A}$, we have that for all $(a,A) \in \mathbb{M}_\cal{R}$, there exists some $B \in [a,A]$ such that for all $b \in \cal{AR}\restrictedto B$ with $\depth_B(b) \geq \lh(a)$, there exists some $A_b \in \E_b$ such that $[b,B] \subseteq [b,A_b]$. That is, the set:
    \begin{align*}
        \D' := \{(a,A) \in \mathbb{M}_\cal{R} : {}&\forall b \in \cal{AR}\restrictedto A \, (\depth_A(b) \geq \lh(a) \\
        &\to \exists A_b \in \E_b \, [b,A] \subseteq [b,A_b])\}
    \end{align*}
    is dense in $\mathbb{M}_\cal{R}$. Since $G_{A_G}$ is generic, we may let $(a,A) \in G_{A_G} \cap \D'$, so $A_G \in [a,A]$ and $D \leq A$. Let $n$ be such that $\depth_A(r_n(D)) \geq \lh(a)$, let $d := r_n(D)$, and let $A_d \in \E_d$ be such that $[d,A] \subseteq [d,A_d]$. Then $D \in [d,A] \subseteq [d,A_d]$, and $(d,A_d)$ captures $\D$, so by the absoluteness of capturing (see the remark after Definition \ref{def:capture}) there exists some initial segment $e \in \cal{AR}\restrictedto[d,D]$ of $D$ such that $(e,A_d) \in \D$. Then $D \in [e,A_d]$, so $(e,A_d) \in H \cap \D$. Since $\D$ is arbitrary, $H$ is a generic filter, as required.
\end{proof}

\begin{proposition}
\label{prop:semiselective.implies.Prikry}
    $\mathbb{M}_\cal{R}$ has the Prikry property.
\end{proposition}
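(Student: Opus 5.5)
Fix a sentence $\varphi$ of the forcing language and a condition $(a,A) \in \mathbb{M}_\cal{R}$. The plan is to run the abstract combinatorial forcing of \cite[\S4.3]{T10} on the set
\begin{align*}
    \X := \bigcup\{[b,B] : (b,B) \in \mathbb{M}_\cal{R},\ (b,B) \forces \varphi\}.
\end{align*}
This set is Ellentuck open, hence Ramsey by Theorem \ref{thm:abstract.Ellentuck.thm}. In fact we only need Lemma \ref{lem:combinatorial.forcing.basics}(5), Lemma \ref{lem:completely.reject.reduction}, and the diagonalisation lemmas. Apply Lemma \ref{lem:combinatorial.forcing.basics}(5), together with \textbf{A3} to keep $a$ as an initial segment, to obtain some $A' \in [a,A]$ that decides $a$ with respect to $\X$. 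There are two cases, according to whether $A'$ accepts or rejects $a$.

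\emph{Case 1: $A'$ accepts $a$,} so $[a,A'] \subseteq \X$. We will show that $(a,B) \forces \varphi$ for a suitable $B \in [a,A']$. Let $\D := \{(b,C) : (b,C) \forces \varphi \text{ or } (b,C) \forces \neg\varphi\}$; this is dense open. By Lemma \ref{lem:capturing.possible}, choose $B \in [a,A']$ such that $(a,B)$ captures $\D$. Suppose toward a contradiction that $(a,B) \not\forces \varphi$. Then some $(c,C) \le (a,B)$ forces $\neg\varphi$. Shrink $C$ to get $C' \in [c,C] \subseteq [a,A'] \subseteq \X$. Since $C' \in \X$, there is some $(b,E)$ forcing $\varphi$ with $C' \in [b,E]$.

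We need $(b,E)$ and $(c,C)$ to be compatible, which gives the contradiction. The natural way to secure this is to extend $c$ and $b$ to a common initial segment $d \sqsubseteq C'$ of length $\max(\lh b,\lh c)$. Then \textbf{A3}(2) yields some $[d,C'']$ contained in both $[b,E]$ and $[c,C]$, so a common extension exists, which is impossible.

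Capturing suggests a cleaner route for this case. For every $D \in [a,B]$ there is $b \sqsubseteq D$ with $(b,B) \in \D$. We then only need to rule out $(b,B) \forces \neg\varphi$ for such $b$, and this is exactly the compatibility argument above. Once that is done, every generic through $(a,B)$ meets a condition forcing $\varphi$, so $(a,B) \forces \varphi$.

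\emph{Case 2: $A'$ rejects $a$.} By Lemma \ref{lem:completely.reject.reduction}, pick $B \in [a,A']$ that rejects every $b \in \cal{AR}\restrictedto[a,B]$. We claim $(a,B) \forces \neg\varphi$. If not, some $(c,C) \le (a,B)$ forces $\varphi$, so $[c,C] \subseteq \X$ and $C$ accepts $c$. By \textbf{A3}(2) there is $B' \in [\depth_B(c),B]$ with $[c,B'] \subseteq [c,C]$. Then $B'$ accepts $c$, and $B' \in [c,B]$, which contradicts the fact that $B$ rejects $c$.

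The main obstacle is Case 1, specifically turning "$[a,B] \subseteq \X$" into an actual forcing statement. Ellentuck-openness of $\X$ only gives local pieces $[b,E]$ that force $\varphi$, and these have different stems $b$ and different second coordinates $E$. Capturing $\D$ via Lemma \ref{lem:capturing.possible}, combined with the incompatibility argument through \textbf{A3}(2), is what should make the conditions uniform.
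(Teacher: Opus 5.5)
Your proposal follows the paper's proof. You run combinatorial forcing with respect to $\X=\bar{\D}$. In the accepting case, a point $C'\in[c,C]\subseteq[a,A']$ lies in some $[b,E]$ with $(b,E)\forces\varphi$. Then $(d,C')$ with $d=r_{\max\{\lh(b),\lh(c)\}}(C')$ extends both $(b,E)$ and $(c,C)$. The rejecting case goes through Lemma \ref{lem:completely.reject.reduction}. Your ``natural'' argument in Case 1 is already complete and is exactly the paper's. The inclusion $[d,C']\subseteq[b,E]\cap[c,C]$ holds simply because $b,c\sqsubseteq d\sqsubseteq C'$ and $C'\leq E$, $C'\leq C$. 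So neither \textbf{A3}(2) nor capturing $\D$ (with the absoluteness that route would need) is required, and the obstacle you describe does not actually arise.

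Case 2 has two slips as written, both easily repaired. First, $B$ only rejects those $b\in\cal{AR}\restrictedto B$ with $a\sqsubseteq b$. But $(c,C)\leq(a,B)$ only guarantees that $a$ and $c$ are $\sqsubseteq$-comparable, not that $a\sqsubseteq c$. Second, $B'\in[\depth_B(c),B]$ does not give $c\sqsubseteq B'$, since $c$ need not be an initial segment of $B$. So the claim $B'\in[c,B]$ is unjustified. Both are fixed by the normalisation the paper uses. Pick $C_0\in[c,C]$, let $n=\max\{\lh(a),\lh(c)\}$, and put $c'=r_n(C_0)$. Then $(c',C_0)\leq(c,C)$ still forces $\varphi$, and $a\sqsubseteq c'\sqsubseteq C_0\leq B$, so $c'\in\cal{AR}\restrictedto[a,B]$. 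Now $C_0$ itself lies in $[c',B]$ and accepts $c'$, because $[c',C_0]\subseteq[c,C]\subseteq\X$. This contradicts $B$ rejecting $c'$, and no appeal to \textbf{A3}(2) is needed.
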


\begin{proof}
    Let $\varphi$ be a sentence in the forcing language. Let:
    \begin{align*}
        \D &:= \{q \in \mathbb{M}_\cal{R} : q \forces \varphi\}.
    \end{align*}
    Note that $\D$ is open. We consider doing combinatorial forcing w.r.t. the set $\bar{\D}$. Let $(a,A) \in \mathbb{M}_\cal{R}$. By Lemma \ref{lem:combinatorial.forcing.basics}(5--6), there exists some $B \in [a,A]$ that decides $a$.
    \begin{itemize}
        \item Suppose that $B$ accepts $a$, so $[a,B] \subseteq \bar{\D}$. We shall show that no $(b,C) \leq (a,B)$ forces $\neg\varphi$, so that $(a,B) \forces \varphi$. Otherwise, there exists some $(b,C) \leq (a,B)$ such that $(b,C) \forces \neg\varphi$, and we may assume that $b \sqsubseteq C$. Since $C \in [a,B] \subseteq \bar{\D}$, $C \in [a',B']$ for some $(a',B') \in \mathbb{M}_\cal{R}$ such that $(a',B') \forces \varphi$. Let $n \geq \max\{\lh(b),\lh(a')\}$, and we have that $(r_n(C),C) \leq (b,C)$ and $(r_n(C),C) \leq (a',B')$, so $(r_n(C),C)$ forces both $\varphi$ and $\neg\varphi$, a contradiction. Thus $(a,B) \leq_0 (a,A)$ and $(a,B) \forces \varphi$.

        \item Suppose that $B$ rejects $a$. By Lemma \ref{lem:completely.reject.reduction}, replacing $B$ by an element of $[a,B]$, we may assume that $B$ rejects every $b \in \cal{AR}\restrictedto[a,B]$. We shall show that no $(b,C) \leq (a,B)$ forces $\varphi$, so that $(a,B) \forces \neg\varphi$. Otherwise, $(b,C) \in \D$ for some $(b,C) \leq (a,B)$, and we may assume that $a \sqsubseteq b \sqsubseteq C$. Then $[b,C] \subseteq \bar{\D}$, i.e. $C$ accepts $b$. On the other hand, $b \in \cal{AR}\restrictedto[a,B]$ and $C \leq B$, so $C$ rejects $b$ by Lemma \ref{lem:combinatorial.forcing.basics}(3), which is impossible. Thus $(a,B) \leq_0 (a,A)$ and $(a,B) \forces \neg\varphi$.
    \end{itemize}
\end{proof}

\section{Proof of Theorem \ref{thm:main}}
\label{sec:proof.main}
Throughout this section, we fix a closed triple $(\cal{R},\leq,r)$ satisfying \textbf{A1}--\textbf{A4} such that $\cal{AR}$ is countable. Since $\cal{AR}$ is countable, we may assume that $\cal{AR} \subseteq \V_\omega$, so that $\cal{R} \subseteq \cal{AR}^\N$ is a set of reals, and $\lh$, $\sqsubseteq$ and $\leq_\fin$ are subsets of $\V_\omega$ (hence are coded by reals). 

For a transitive model $M$ of set theory containing $\cal{AR}$, $\lh$, $\sqsubseteq$ and $\leq_\fin$, we write $\cal{R}^M$ for $\cal{R}$ as computed in $M$. Note that closedness and the axioms \textbf{A1}--\textbf{A4} are absolute between such models with the same ordinals. This follows from Shoenfield absoluteness, as closedness and \textbf{A1}--\textbf{A4} are $\Pi_2^1$ statements (except for \textbf{A3}(2), which is $\Pi_3^1$ as written, but one can show that for $a \in \cal{AR}\restrictedto B'$, $[a,B'] \subseteq [a,B]$ iff $\cal{AR}\restrictedto[a,B'] \subseteq \cal{AR}\restrictedto B$, which is arithmetical).

The following lemma is due to Solovay, and a proof may be found in \cite{Sol70}.

\begin{lemma}[Solovay]
\label{lem:Solovay}
    Let $\kappa$ be an inaccessible cardinal, let $\P \in \V$ be a forcing poset of size $<\kappa$, and let $G$ be $\Coll(\omega,{<}\kappa)$-generic over $\V$.
    \begin{enumerate}
        \item For every $p \in \P$, there exists some $H \in \V[G]$ that is $\P$-generic over $\V$ such that $p \in H$.

        \item If $H \in \V[G]$ is $\P$-generic over $\V$, then there exists some $G^*$ that is $\Coll(\omega,{<}\kappa)$-generic over $\V[H]$ such that $\V[H][G^*] = \V[G]$.
    \end{enumerate}
\end{lemma}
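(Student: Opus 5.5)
The plan is to reduce both parts to the standard absorption fact for the Lévy collapse: \emph{if $\lambda$ is an infinite cardinal and $\mathbb{Q}$ is a separative forcing of size at most $\lambda$ that forces $|\check\lambda| = \aleph_0$, then $\mathbb{Q}$ has a dense subset isomorphic to a dense subset of $\Coll(\omega,\lambda)$.} Equivalently, $\mathbb{Q}$ and $\Coll(\omega,\lambda)$ have isomorphic Boolean completions. I will use it in this form: for any poset $\P$ of size $\le\lambda$, $\P\times\Coll(\omega,\lambda)$ is forcing-equivalent to $\Coll(\omega,\lambda)$. Alongside it I use the product factorisation: for $\alpha<\kappa$, $\Coll(\omega,{<}\kappa) \cong \Coll(\omega,{<}\alpha)\times\Coll(\omega,[\alpha,\kappa))$. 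Because conditions are finite partial functions, both factors are the same posets in $\V$ and in any outer model with the same ordinals.

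For (1), let $\lambda\ge|\P|$ be a cardinal below $\kappa$ and fix $p\in\P$.
\begin{enumerate}
\item The coordinate $\lambda$ of $\Coll(\omega,{<}\kappa)$ is a copy of $\Coll(\omega,\lambda)$, so $G$ induces a $\Coll(\omega,\lambda)$-generic $K$ over $\V$.
\item Apply the absorption fact in $\V$ to the poset $\{q\in\P : q\le p\}\times\Coll(\omega,\lambda)$. Since it collapses $\lambda$, its Boolean completion is isomorphic to $\mathrm{RO}(\Coll(\omega,\lambda))$. Transporting $K$ across this isomorphism gives a filter generic over $\V$ for the product.
\item The first coordinate of that filter, closed upwards in $\P$, is the required $H$. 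It contains $p$ and lies in $\V[K]\subseteq\V[G]$.
\end{enumerate}

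For (2), let $H\in\V[G]$ be $\P$-generic over $\V$.
\begin{enumerate}
\item Since $\Coll(\omega,{<}\kappa)$ is $\kappa$-c.c.\ and $H$ is a subset of a set of size $<\kappa$, a nice name for $H$ involves fewer than $\kappa$ conditions. Hence $H\in\V[G\restrictedto\alpha]$ for some $\alpha<\kappa$, where $G\restrictedto\alpha$ denotes the $\Coll(\omega,{<}\alpha)$-part. Enlarging $\alpha$, I may assume $\alpha=\lambda+1$ for a cardinal $\lambda\ge|\P|$, so that $\Coll(\omega,{<}\alpha)$ has size $\lambda$ and collapses $\lambda$.
\item By the intermediate model theorem, $\V[G\restrictedto\alpha]=\V[H][K]$ for some quotient forcing $\mathbb{Q}\in\V[H]$ and some $K$ that is $\mathbb{Q}$-generic over $\V[H]$. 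The quotient can be taken of size $\le\lambda$.
\item By the product lemma, $G_{\mathrm{tail}} := G\restrictedto[\alpha,\kappa)$ is $\Coll(\omega,[\alpha,\kappa))$-generic over $\V[G\restrictedto\alpha]=\V[H][K]$. So $K\times G_{\mathrm{tail}}$ is generic over $\V[H]$ for $\mathbb{Q}\times\Coll(\omega,[\alpha,\kappa))$, and $\V[G]=\V[H][K\times G_{\mathrm{tail}}]$.
\item In $\V[H]$, split $\Coll(\omega,[\alpha,\kappa))$ as $\Coll(\omega,\{\mu\})\times\Coll(\omega,[\alpha,\kappa)\setminus\{\mu\})$ for some $\mu\ge\lambda$. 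Absorb $\mathbb{Q}$ into the $\mu$-factor: $\mathbb{Q}\times\Coll(\omega,\mu)\simeq\Coll(\omega,\mu)$, which is legitimate because $\mathbb{Q}$ forces $\lambda$, hence $|\mathbb{Q}|$, to be countable.
\item The remaining coordinates, $[\alpha,\kappa)\setminus\{\mu\}$ together with $\mu$, must then be rearranged to match $\Coll(\omega,{<}\kappa)$. The sizes of the coordinate sets agree, so their Boolean completions are isomorphic, and this yields the required $G^*$ with $\V[H][G^*]=\V[G]$.
\end{enumerate}
In this argument I need $\kappa$ to remain inaccessible in $\V[H]$, which holds because $\P$ is small.

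The main obstacle is the absorption fact itself, together with the bookkeeping needed to make the Boolean-algebra isomorphisms respect the filters. To prove absorption, I would use the standard construction. Work in the extension, where $\lambda$ is countable. Build, by a back-and-forth argument, a tree of maximal antichains of $\mathrm{RO}(\mathbb{Q})$ of height $\omega$ with $\lambda$-many splittings below every node. Its branch-space realises a dense copy of $\Coll(\omega,\lambda)$ inside $\mathrm{RO}(\mathbb{Q})$. The requirement that $\mathbb{Q}$ collapse $\lambda$ is what guarantees exactly $\lambda$ incompatible extensions below each condition at every level. I would first try to quote this from the literature (Kripke, or Jech's \emph{Set Theory}, Lemma 26.7) rather than reprove it.
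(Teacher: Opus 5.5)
The paper gives no proof of this lemma; it only cites Solovay. Your route is the standard argument behind that citation: Kripke's absorption lemma for $\Coll(\omega,\lambda)$, plus factoring $\V[G]$ as $\V[H][K][G\restrictedto[\alpha,\kappa)]$ via boundedness of nice names, the intermediate-model theorem and the product lemma. Part (1) and steps 1--3 of part (2) are fine as written.

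The one step whose stated justification does not hold up is step 5 of part (2). ``The sizes of the coordinate sets agree'' does not give an isomorphism of completions, because index sets of equal size say nothing about which factor sits over each coordinate. Note that $\Coll(\omega,{<}\kappa)\cong\Coll(\omega,{<}\alpha)\times\Coll(\omega,[\alpha,\kappa))$. The extra factor $\Coll(\omega,{<}\alpha)$ is not accounted for by the $\Coll(\omega,[\alpha,\kappa))$-generic you are left with after step 4. It has to be absorbed, exactly as $\mathbb{Q}$ was. Since $|\Coll(\omega,{<}\alpha)|=\lambda\le|\mu|$, in $\V[H]$ you get $\mathbb{Q}\times\Coll(\omega,\mu)\simeq\Coll(\omega,\mu)\simeq\Coll(\omega,{<}\alpha)\times\Coll(\omega,\mu)$. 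Multiplying by $\Coll(\omega,[\alpha,\kappa)\setminus\{\mu\})$ then gives $\mathbb{Q}\times\Coll(\omega,[\alpha,\kappa))\simeq\Coll(\omega,{<}\kappa)$ in $\V[H]$. Transport $K\times G\restrictedto[\alpha,\kappa)$ along this equivalence, which lies in $\V[H]$. The result is $G^*$, $\Coll(\omega,{<}\kappa)$-generic over $\V[H]$, with $\V[H][G^*]=\V[G]$.

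Two smaller corrections.
\begin{enumerate}
\item In step 4, what licenses the absorption is $|\mathbb{Q}|\le|\mu|$ (computed in $\V[H]$), together with the fact that the $\Coll(\omega,\mu)$ factor already collapses $\mu$. Whether $\mathbb{Q}$ collapses $\lambda$ plays no role.
\item If you sketch the absorption lemma rather than cite it, the tree of maximal antichains of $\mathrm{RO}(\mathbb{Q})$ is built in the ground model, not ``in the extension''. The collapse hypothesis supplies a name $\dot f$ for a map from $\omega$ onto the generic filter. Requiring level $n+1$ of the tree to decide $\dot f(n)$ is what makes the tree dense. The hypothesis also yields antichains of size $\lambda$ below every condition, so each node can be given exactly $\lambda$ successors.
\end{enumerate}
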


We are now ready to prove Theorem \ref{thm:main}. The proof follows that of \cite[Theorem 1.7]{Y25}.

\begin{proof}[Proof of Theorem \ref{thm:main}]
    Let $\X \in \L(\R)^{\V[G]}$ be a subset of $\cal{R}$. Note that $\cal{R}^{\V[G]} = \cal{R}^{\L(\R)^{\V[G]}}$, as every element of $\cal{AR}^\N \cap \V[G]$ is coded by a real, so it suffices to show that $\X$ is Ramsey in $\V[G]$. Fix some $A \in \cal{R}^{\V[G]}$ and $a \in \cal{AR}\restrictedto A$. We need to show that there exists some $B \in [a,A]$ such that $[a,B] \subseteq \X$ or $[a,B] \subseteq \X^c$.
    
    Since every set in $\L(\R)^{\V[G]}$ is ordinal-definable from some real in $\V[G]$, there exist a formula $\phi$, some real $r \in \V[G]$ and a finite sequence of ordinals $\alpha$ such that for all $B \in \cal{R}^{\V[G]}$:
    \begin{align*}
        B \in \X \iff \V[G] \models \phi[B,r,\alpha].
    \end{align*}
    Let $\phi^*(x,y,w)$ be the formula asserting that $\one \forces_{\Coll(\omega,{<}\kappa)} \phi(\check{x},\check{y},\check{w})$. By the $\kappa$-chain condition of $\Coll(\omega,{<}\kappa)$, there exists some $\lambda < \kappa$ such that $r,A \in \V[G\restrictedto\lambda]$. Let $\W := \V[G\restrictedto\lambda]$, and note that $\kappa$ remains inaccessible in $\W$ and $\V[G] = \W[G\restrictedto[\lambda,\kappa)]$. We consider, in $\W$, the abstract Mathias forcing $\mathbb{M} := (\mathbb{M}_\cal{R})^\W$, and note that $(a,A) \in \mathbb{M}$. For $B \in \cal{R}^{\V[G]}$, recall from the beginning of \S\ref{sec:abstract.Mathias.forcing} that $G_B := \{(c,C) \in \mathbb{M} : B \in [c,C]\}$, so that $B$ is $\mathbb{M}$-generic over $\W$ iff $G_B$ is an $\mathbb{M}$-generic filter over $\W$, in which case $B = A_{G_B}$ and $\W[G_B] = \W[B]$ (Lemma \ref{lem:generic.filter.and.element}).

    \begin{claim}
    \label{claim:generic.in.X}
        Let $B \in \cal{R}^{\V[G]}$ be $\mathbb{M}$-generic over $\W$. Then:
        \begin{align*}
            B \in \X \iff \W[B] \models \phi^*[B,r,\alpha].
        \end{align*}
    \end{claim}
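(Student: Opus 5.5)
This is the standard Solovay-model argument, routed through Lemma~\ref{lem:Solovay} applied over $\W$. I would first make sure the lemma is applicable there. The forcing $\mathbb{M} \in \W$ has size at most $2^{\aleph_0}$ in $\W$, hence size $<\kappa$ because $\kappa$ is inaccessible in $\W$. Moreover, $\V[G] = \W[G\restrictedto[\lambda,\kappa)]$, and $G\restrictedto[\lambda,\kappa)$ is generic over $\W$ for a poset isomorphic to $\Coll(\omega,{<}\kappa)$ as computed in $\W$. So Lemma~\ref{lem:Solovay} applies with $\W$ in place of $\V$.

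Now fix $B \in \cal{R}^{\V[G]}$ that is $\mathbb{M}$-generic over $\W$. By Lemma~\ref{lem:generic.filter.and.element}, $G_B$ is $\mathbb{M}$-generic over $\W$ and $\W[G_B] = \W[B]$. Lemma~\ref{lem:Solovay}(2) then gives a $G^*$ that is $\Coll(\omega,{<}\kappa)$-generic over $\W[B]$ with $\W[B][G^*] = \V[G]$. Since $B$, $r$ and $\alpha$ all lie in $\W[B]$, the forcing theorem applied over $\W[B]$ shows that $\V[G] \models \phi[B,r,\alpha]$ if and only if some condition in $G^*$ forces $\phi(\check{B},\check{r},\check{\alpha})$. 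Combined with the choice of $\phi$, this says $B \in \X$ if and only if some $p \in G^*$ forces $\phi(\check B,\check r,\check\alpha)$.

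The remaining step, and the only real obstacle, is to replace ``some condition in $G^*$ forces $\phi$'' by ``$\one$ forces $\phi$'', which is exactly $\phi^*$. For this I would use the weak homogeneity of $\Coll(\omega,{<}\kappa)$. Given any $p,q$, there is an automorphism $\pi$ of the collapse with $\pi(p)$ compatible with $q$, and $\pi$ fixes all check names. Hence any statement about check names is forced either by $\one$ or by $\one$ of its negation. Applying this over $\W[B]$: if some $p \in G^*$ forces $\phi(\check B,\check r,\check\alpha)$, then $\one$ forces it; conversely, if $\one$ forces it, then $\phi$ holds in $\W[B][G^*] = \V[G]$. Therefore $B \in \X$ if and only if $\W[B] \models \phi^*[B,r,\alpha]$, which proves the claim.

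The only care needed is to check that $\phi^*$ is evaluated with the forcing relation of $\Coll(\omega,{<}\kappa)$ as defined in $\W[B]$. This is the same poset as in $\V$, since $\Coll(\omega,{<}\kappa)$ is defined absolutely from $\kappa$.
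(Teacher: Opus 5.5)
Your proposal is correct and is essentially the paper's proof. Like the paper, you apply Lemma~\ref{lem:Solovay}(2) inside $\W$ to the $\mathbb{M}$-generic filter $G_B$ to get $\V[G] = \W[B][G^*]$, and then use the weak homogeneity of $\Coll(\omega,{<}\kappa)$ over $\W[B]$ to turn truth of $\phi[B,r,\alpha]$ in $\W[B][G^*]$ into $\one$ forcing it, which is exactly $\phi^*$.

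One small quibble concerns your check of the hypotheses. The tail $G\restrictedto[\lambda,\kappa)$ is generic for a poset that is forcing-equivalent to $\Coll(\omega,{<}\kappa)$, but in general not literally isomorphic to it. The cleanest way to see that $\V[G]$ is a $\Coll(\omega,{<}\kappa)$-generic extension of $\W$ is to apply Lemma~\ref{lem:Solovay}(2) in $\V$ with $\mathbb{P} = \Coll(\omega,{<}\lambda)$ and $H = G\restrictedto\lambda$.
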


    \begin{midproof}
        Since $\W \models |\mathbb{M}| \leq 2^{\aleph_0} < \kappa$ and $G_B \in \V[G]$ is $\mathbb{M}$-generic over $\W$, by Lemma \ref{lem:Solovay}(2) applied in $\W$ there exists some $\Coll(\omega,{<}\kappa)$-generic $G^*$ over $\W[B]$ such that $\W[B][G^*] = \V[G]$. Therefore:
        \begin{align*}
            B \in \X &\iff \V[G] \models \phi[B,r,\alpha] \\
            &\iff \W[B][G^*] \models \phi[B,r,\alpha] \\
            &\iff \W[B] \models \phi^*[B,r,\alpha],
        \end{align*}
        where the last equivalence follows from the homogeneity of $\Coll(\omega,{<}\kappa)$.
    \end{midproof}

    Let $\dot{A}$ be the canonical $\mathbb{M}$-name for the $\mathbb{M}$-generic element of $\cal{R}$. By the Prikry property of $\mathbb{M}$ (Proposition \ref{prop:semiselective.implies.Prikry}), there exists some $(a,A') \leq_0 (a,A)$ in $\mathbb{M}$ which decides the sentence $\phi^*(\dot{A},\check{r},\check{\alpha})$. By Lemma \ref{lem:Solovay}(1) applied in $\W$, there exists some $H \in \V[G]$ that is $\mathbb{M}$-generic over $\W$ such that $(a,A') \in H$. Let $A_H \in \cal{R}^{\V[G]}$ be the $\mathbb{M}$-generic element of $\cal{R}$ such that $H = G_{A_H}$ (which exists by Lemma \ref{lem:generic.filter.and.element}), and note that $A_H \in [a,A'] \subseteq [a,A]$. We shall finish the proof by showing that $[a,A_H] \subseteq \X$ or $[a,A_H] \subseteq \X^c$. 

    Suppose that $(a,A') \forces \phi^*(\dot{A},\check{r},\check{\alpha})$, and let $B \in [a,A_H]$. By the Mathias property of $\mathbb{M}$ (Proposition \ref{prop:semiselective.implies.Mathias}), $B$ is also $\mathbb{M}$-generic over $\W$. Since $B \leq A_H \leq A'$, we have $B \in [a,A']$, so $(a,A') \in G_B$, and as $\dot{A}^{G_B} = B$, $\W[B] \models \phi^*[B,r,\alpha]$. Hence $B \in \X$ by Claim \ref{claim:generic.in.X}. Therefore, $[a,A_H] \subseteq \X$. Alternatively, if $(a,A') \forces \neg\phi^*(\dot{A},\check{r},\check{\alpha})$, then $[a,A_H] \subseteq \X^c$.
\end{proof}

We remark that Theorem \ref{thm:main} appeared as \cite[Theorem 6.1]{Y24}, with the following justification: Since every subset of a Polish space has the property of Baire in the Solovay model and the Ellentuck topology refines the Polish topology, every subset of $\cal{R}$ is Ramsey by the abstract Ellentuck theorem. This is incorrect, as a set with the property of Baire with respect to a topology need not have the property of Baire with respect to a finer topology. In fact, the implication ``every metrically Baire subset of $[\N]^\infty$ is Ramsey'' fails in $\ZFC$: Let $A \in [\N]^\infty$ be coinfinite, and let $\X \subseteq [A]^\infty$ be any non-Ramsey set. Since $[A]^\infty$ is (metrically) nowhere dense, so is $\X$, so $\X$ is metrically Baire but not Ramsey.

\printbibliography[title={References}]

\end{document}